\newcommand{\R}{\mathds{R}}
\newcommand{\N}{\mathds{N}}
\def\d{\,\mathrm{d}}
\def\da{\d a}
\def\ds{\d s}
\def\1{\mathbbm{1}}
\def\:{\colon}
\theoremstyle{plain}
\newtheorem{thm}{Theorem}
\newtheorem{lem}{Lemma}
\newtheorem{prp}{Proposition}
\newtheorem{hyp}{Hypothesis}
\newtheorem{rmk}{Remark}
\numberwithin{equation}{section}
\title{Comparison principles and asymptotic behavior of delayed age-structured neuron models}
\author[1]{María J. Cáceres\thanks{\textit{E-mail address}: \texttt{\href{mailto:caceresg@ugr.es}{caceresg@ugr.es}}}}
\author[1]{José A. Cañizo \thanks{\textit{E-mail address}: \texttt{\href{mailto:canizo@ugr.es}{canizo@ugr.es}}}}
\author[2,1]{Nicolas Torres\thanks{\textit{E-mail address}:\texttt{\href{mailto:torres@ugr.es}{torres@ugr.es}}}}
\affil[1]{Universidad de Granada, Departamento de Matemática Aplicada \& IMAG, Andalusia, Spain.}
\affil[2]{Université Côte d'Azur, Laboratoire Jean Alexandre Dieudonné - CNRS UMR 7351, Nice, France.}
\date{February 2025}
\begin{document}
\maketitle

\begin{abstract}
  In the context of neuroscience the elapsed-time model is an
  age-structured equation that describes the behavior of
  interconnected spiking neurons through the time since the last
  discharge, with many interesting dynamics depending on the type of
  interactions between neurons. We investigate the asymptotic behavior
  of this equation in the case of both discrete and distributed delays
  that account for the time needed to transmit a nerve impulse from
  one neuron to the rest of the ensemble. To prove the convergence to
  the equilibrium, we follow an approach based on comparison
  principles for Volterra equations involving the total activity,
  which provides a simpler and more straightforward alternative
  technique than those in the existing literature on the elapsed-time
  model.
\end{abstract}

\noindent{\makebox[1in]\hrulefill}\newline
2010 \textit{Mathematics Subject Classification.}  35F15, 35F20, 92-10.\\
\textbf{Keywords}: Age-structured models, Delay equations, Comparison principles, Volterra equations.


\section{Introduction}
\label{sec:intro}

Several mean-field models have been proposed to describe the electrical
activity of a large group of interconnected neurons. They usually take
the form of a partial differential equation with a time variable and
additional variables, often called structure variables, which describe
one or more additional quantities of the system. For example, models
structured by the membrane potential of neurons such as the integrate-and-fire systems are well-known with a vast literature
\cite{brunel2000dynamics,caceres2011analysis,perthame2013voltage,caceres2018analysis,caceres2019global,roux2021towards,caceres2019global,caceres2025asymptotic,caceres2024sequence}.

This article is devoted to the study of an age-structured model for an
interconnected ensemble of neurons described by the elapsed time since
last discharge at the membrane potential, which is known as the
elapsed-time equation (ET). In this model neurons are subjected to
random discharges so that when they reach the firing potential, they
stimulate (or inhibit) other neurons to spike. Depending on the type
of interaction, different possible behaviors of the brain activity are
possible.

This equation was initially proposed in \cite{pakdaman2009dynamics}
and then subsequently developed by many authors with different
extensions by incorporating new elements such as the fragmentation
equation \cite{pakdaman2014adaptation}, spatial dependence with
connectivity kernel in \cite{torres2020dynamics}, a multiple-renewal
equation in \cite{torres2022multiple} and a leaky memory variable in
\cite{fonte2022long}. Moreover, like the case of membrane potential
models such as the Fokker-Planck equation, this model can be obtained
as a mean-field limit of a microscopic model and it establishes a
bridge of the dynamics of a single neuron with a population-based
approach, whose aspects have been investigated in
\cite{pham1998activity,ly2009spike,chevallier2015microscopic,chevallier2015mean,quininao2016microscopic,schwalger2019mind}. Readers
seeking further information may consult \cite{carrillo2025nonlinear}
for a comprehensive review of nonlinear partial differential equations
in neuroscience.

We begin by introducing the model and summarizing its background, followed by a description of the results addressed in this article.

In all models in this paper, $n = n(t,a)$ represents the density at
time $t \geq 0$ of neurons which fired $a \geq 0$ units of time
ago. The time elapsed since the last spike is commonly referred to as the \emph{neuron’s age}.  We always write the models in dimensionless form to simplify the
mathematical treatment, but units can be easily added by standard
procedures. In this work we focus on the \emph{elapsed-time model with distributed delay}, which correspond to the nonlinear system given by
\begin{subequations}
  \label{distri-delay-intro}
  \begin{align}
    \label{distri-delay-transport-intro}
    &\partial_t n +\partial_a n + S(a,X(t))n=0,
    && t,a>0,
    \\
    \label{distri-delay-boundary-intro}
    &n(t,a=0) = r(t) \coloneqq
      \int_0^\infty S(a,X(t))n(t,a)\da,
    && t>0,
    \\
    \label{distri-delay-X-intro}
    &X(t)=\int_{-\infty}^t \alpha(t-s)r(s)\ds,
    && t>0.
  \end{align}
  
  The quantity $r = r(t)$ represents the total number (or density) of
  neurons which fire at time $t$, which means that the membrane
  potential reaches a threshold value and then resets to a baseline
  value. This term $r(t)$ determines the \emph{total activity} of the
  neuron network, represented by the quantity $X = X(t)$ through a
  convolution with a certain nonnegative function
  $\alpha\in L^1(\R^+)$ with $\int_0^\infty\alpha(s)\ds=1$, which is
  known as the kernel of distributed delay. This convolution takes into
  account the delay in transmission after a neuron spikes and the
  value $\alpha(s)$ represents the influence in the total activity at
  time $t$ of a neuron which fired at time $t-s$. In this context, it
  is understood that the history of the rate $r(t)$ for $t < 0$ is
  fixed as an initial condition (as we explain later in
  \eqref{eq:delay-initial-r})

  The nonnegative function $S(a,X)$ is called the \emph{firing coefficient}
  and it represents the susceptibility of neurons to discharge. This
  function accounts for the effect that a total activity $X$ has on
  neurons of age $a$. As we see in the boundary condition
  \eqref{distri-delay-boundary-intro} of $n$ at $a=0$, when a neuron
  discharges at time $t$ its age is reset $0$, so that the firing rate
  $r(t)$ is determined by an integral involving the firing coefficient $S$ and
  the total activity $X(t)$, which depends on the previous states of
  the system for the firing rate and the delay kernel $\alpha$.

  A typical choice for the firing coefficient is
  $S(a, X) = \varphi(X) \1_{a > \sigma}$, which represents a network
  of neurons with an absolute refractory time $\sigma \geq 0$ during which they cannot fire again after a given discharge. Furthermore, the function $S$ may be
  increasing or decreasing in $X$, to allow for excitatory of
  inhibitory interactions respectively and it determines the type of regime of the system. In the case that $S$ does not
  depend on $X$ the model becomes linear, and its study is
  considerably simpler. We notice that $r(t)$ can be calculated by
  knowing $n(s,a)$ for times $s<t$, so equation
  \eqref{distri-delay-X-intro} is a type of delayed boundary
  condition.

The above equation should be complemented by a suitable
  initial condition,
\begin{align}
  \label{eq:delay-initial-n}
  n(t=0, a) = n^0(a), &\qquad a > 0,
  \\
  \label{eq:delay-initial-r}
  r(t) = r^0(t), &\qquad t < 0,
\end{align}
\end{subequations}
where $n^0 \in L^1(\R^+)$ is a given nonnegative function, and $r^0$
is defined on $(-\infty,0)$. Since \eqref{distri-delay-intro} is a
delay equation, it would be natural to specify $n(t,a)$ for
$t \in (-\infty,0]$ as an initial condition, but only the firing rate
$r(t) = n(t,0)$ is actually used, so we emphasize that it is enough to
set $r(t)$ for negative times $t$. Thus we allow for "infinite
delay" in the equation. The statement of the model in
\cite{pakdaman2009dynamics} is equivalent to assuming $r(t) = 0$ for
all $t < 0$. If for a certain $d > 0$ one assumes that $\alpha(t) = 0$
for all $t > d$, then it is clearly enough to give $r(t)$ for
$t \in [-d,0]$ as initial data (since the values of $r(t)$ for
$t < -d$ do not play any role).

Moreover, we formally have the following mass-conservation property
\begin{equation}
\label{massconservation}
    \int_0^\infty n(t,a)\,da = \int_0^\infty n^0(a)\,da,\qquad\forall t\ge 0,
\end{equation}
and without loss of generality, we will normalize it to $1$ so that $n(t,\cdot)$ can be interpreted as the probability distribution at time $t$ of the time since the last spike.

There are two important situations which are limiting cases of this
one. First, if we take the limit as $\alpha \to \delta_{d}$ (a Dirac
delta function at $t=d$) for some $d > 0$ we formally obtain the model
with \emph{single discrete delay}:
\begin{subequations}
  \label{single-delay-intro}
  \begin{align}
    \label{single-delay-transport-intro}
    &\partial_t n +\partial_a n + S(a, r(t-d))n=0,
    && t,a>0,
    \\
    \label{single-delay-boundary-intro}
    &n(t,a=0) = r(t) \coloneqq
      \int_0^\infty S(a,r(t-d)) n(t,a)\da.
    && t>0.
  \end{align}

This system is known as the case with discrete delay, where the total activity is just the firing rate at time
$t-d$. Now the natural initial condition involves setting
\begin{align}
  \label{eq:single-delay-initial-n}
  n(t=0, a) = n^0(a), &\qquad a > 0,
  \\
  \label{eq:single-delay-initial-r}
  r(t) = r^0(t), &\qquad -d \leq t < 0.
\end{align}
\end{subequations}
In turn, if we consider the limit $d = 0$ then this system becomes
\begin{subequations}
  \label{no-delay-intro}
  \begin{align}
    \label{no-delay-transport}
    &\partial_t n +\partial_a n + S(a, r(t))n=0,
    && t,a>0,
    \\
    \label{no-delay-boundary}
    &n(t,a=0) = r(t) =
      \int_0^\infty S(a,r(t)) n(t,a)\da.
    && t>0.
  \end{align}
\end{subequations}
This system is known as the case with instantaneous transmission. Now the definition of $r(t)$ is an independent equation, which has to
be solved together with the whole system and the only initial
condition to set is $n(0,a)$ for $a > 0$. If $n(t,a)$ is known for a
certain $t$, then finding an $r(t)$ which satisfies
$r(t) = \int_0^\infty S(a,r(t)) n(t,a)\da$ may be an ill-posed
problem; see \cite{pakdaman2009dynamics} or
\cite{canizo2019asymptotic} for a simple example, and more recently
\cite{sepulveda2023well} for an analysis of the conditions which may
stop this system from being well-posed.

Concerning the steady states, the equilibria of
\eqref{distri-delay-intro} are given by the equation:

\begin{equation}
  \label{stationary-equation}
    \begin{cases}
           \partial_a n^* + S(a,X^*)n^*=0 & a>0,\vspace{0.1cm}\\
            r^*\coloneqq n^*(a=0)=\int_0^\infty S(a,X^*)n^*(a)\da,&\vspace{0.1cm}\\
            X^*=r^*\int_0^\infty\alpha(s)\ds.
    \end{cases}
\end{equation}
Thanks to the normalization $\int_0^\infty\alpha(s)\,ds=1$ we have
$X^*=r^*$. From the first equation of the system we get that
$n^*(a):=r^*e^{-\int_0^a S(a',X^*) \ da'}$ and the following equation
holds for $r^*$:
\begin{equation}
\label{Int-r}
    r^*I(r^*)=1,\ \textrm{with}\:I(r)\coloneqq \int_0^\infty e^{-\int_0^a S(s,r)\ds}\da,
\end{equation}
as a consequence of the mass-conservation property. 

For simplicity we call the steady state as the pair $(n^*,r^*)$, since $X^*=r^*$. Moreover, for the case of instantaneous transmission \eqref{no-delay-intro} and the case with discrete delay \eqref{distri-delay-intro} the definition of an equilibrium is analogous and in all cases we have the same steady states for a given firing coefficient $S$. We also remark that when the system is inhibitory it has a unique steady state, while in the excitatory case multiple steady states may arise \cite{pakdaman2009dynamics}.

Finally, if we fix $X=\bar{r}\ge0$ as parameter in the coefficient $S$ we obtain the following linear equation, which is fundamental to understand the
non-linear problems \eqref{distri-delay-intro} and
\eqref{single-delay-intro}.
\begin{equation}
  \label{eqlinear}
  \left\{
    \quad
  \begin{aligned}
    &\partial_t n +\partial_a n + S(a,\bar{r})n=0 & t,a>0,\\
    &n(t,a=0) = r(t)\coloneqq\int_0^\infty S(a,\bar{r})n(t,a)\da& t>0,\\
    &n(t=0,a)=n^0(a)& a>0.
  \end{aligned}
  \right.
\end{equation}  
Observe that this linear equation does not have any explicit delay and in this case it can be cast in the form of an
abstract ODE in the space $\mathcal{M}(\R^+)$ of finite signed Borel
measures, given by
\begin{equation}
  \label{operator-L}
  \partial_t n
  =
  L_{\bar{r}}[n]\coloneqq-\partial_a n - S(a,\bar{r})n+\delta_0 \int_0^\infty S(a,\bar{r})n(a)\da.
\end{equation}
For the sake of simplicity of the notation in the computations, we
treat the elements in $\mathcal{M}(\R^+)$ as if they were integrable
functions with corresponding generalization. The solution of this
linear problem determines a positive and mass-preserving semigroup in
$\mathcal{M}(\R^+)$, which will be denoted as $e^{tL_{\bar{r}}}$ in the
sequel. In other words, $e^{tL_{\bar{r}}}$ is a Markov semigroup. The
asymptotic behavior of $e^{tL_{\bar{r}}}$ is well-known, as we state in the following result. 

\begin{prp}[Linear spectral gap]
\label{doeblin-conv}
Assume that $S$ satisfies Hypothesis \ref{hyp:S}, and let $\bar{r}
\geq 0$ be given. Then the pair 
$\left(\bar{n}^*:=\bar{r}^*e^{-\int_0^a S(s,\bar{r}) \ ds},
\bar{r}^*:=\left(\int_0^\infty e^{-\int_0^a S(s,\bar{r}) \ ds}\ da\right)^{-1}\right)$
is the unique positive stationary solution to  Equation
\eqref{eqlinear} such
that $n^*\in L^1(0,\infty)$ with $\int_0^\infty n^*\da=1$.
And there
exist constants $C_0, \lambda>0$ such that for all initial data
$n^0\in \mathcal{M}(\R^+)$ it holds that, for all $t\ge0$,
   \begin{equation}
     \label{exp-conv-linear}
 \begin{aligned}           
&   \|e^{tL_{\bar{r}}} n^0-\langle n^0\rangle \bar{n}^*\|_{TV}
   \le
   C_0 e^{-\lambda t}\|n^0-\langle n^0\rangle  \bar{n}^*\|_{TV}
        \\
        & |r(t)-\langle n^0\rangle \bar{r}^*|
        \le
        C_0 e^{-\lambda t}\|n^0-\langle n^0\rangle  \bar{n}^*\|_{TV}
      \end{aligned}
   \end{equation}
   with $\langle n^0\rangle\coloneqq \int_0^\infty n^0 \da$.
 \end{prp}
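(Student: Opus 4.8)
The starting point is the observation that the linear equation \eqref{eqlinear} is a \emph{conservative renewal equation}: the loss term $S(a,\bar r)n$ is reinjected exactly at the boundary $a=0$, so $e^{tL_{\bar r}}$ is a positive, mass-preserving (Markov) semigroup on $\mathcal M(\R^+)$, with the well-known representation of (measure) solutions along characteristics, $n(t,a)=n^0(a-t)\,e^{-\int_{a-t}^{a}S(s,\bar r)\ds}$ for $a>t$ and $n(t,a)=r(t-a)\,e^{-\int_0^{a}S(s,\bar r)\ds}$ for $a<t$, where $r(t)=n(t,0)=\int_0^\infty S(a,\bar r)n(t,a)\da$. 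The plan is to deduce \eqref{exp-conv-linear} from \emph{Doeblin's theorem}, which reduces the whole statement to a single \emph{minorization}: it is enough to find $t_0>0$, $\beta\in(0,1)$ and a probability measure $\nu$ with $e^{t_0 L_{\bar r}}n^0\ge\beta\,\nu$ for every probability measure $n^0$.

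The heart of the proof is a \emph{uniform lower bound on the firing rate} over a fixed window, which is where the structure of $S$ enters. Hypothesis \ref{hyp:S} provides $S\le\overline S$ and a positive lower bound $S(a,\bar r)\ge\underline S>0$ for $a\ge a_*$, uniformly in $\bar r\ge0$. Fix $t_0>a_*$. For any $s\in[a_*,t_0]$, the neurons that have not fired up to time $s$ all sit at ages $\ge a_*$ and have survived with probability at least $e^{-\overline S s}$, so $n(s,[a_*,\infty))\ge e^{-\overline S s}\langle n^0\rangle=e^{-\overline S s}$, whence $r(s)\ge\underline S\,n(s,[a_*,\infty))\ge\underline S\,e^{-\overline S t_0}=:c>0$, a bound that depends only on $\langle n^0\rangle=1$. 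Feeding this into the characteristic formula, for $a\in(0,t_0-a_*)$ one gets $n(t_0,a)=r(t_0-a)\,e^{-\int_0^{a}S(s,\bar r)\ds}\ge c\,e^{-\overline S t_0}=:\gamma>0$, i.e.\ $e^{t_0 L_{\bar r}}n^0\ge\gamma\,\1_{(0,t_0-a_*)}(a)\,\da$ for every probability measure $n^0$ --- the Doeblin condition with $\beta=\gamma(t_0-a_*)$ and $\nu$ the normalized Lebesgue measure on $(0,t_0-a_*)$.

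The remainder is the standard Doeblin machinery. For a signed measure $\mu$ with $\mu(\R^+)=0$, splitting into Jordan parts $\mu=\mu_+-\mu_-$ of common mass $\tfrac12\|\mu\|_{TV}$ and subtracting $\tfrac12\|\mu\|_{TV}\,\beta\,\nu$ from each of $e^{t_0 L_{\bar r}}\mu_\pm$ yields $\|e^{t_0 L_{\bar r}}\mu\|_{TV}\le(1-\beta)\|\mu\|_{TV}$; iterating, and using $\|e^{tL_{\bar r}}\mu\|_{TV}\le\|\mu\|_{TV}$ (valid since the semigroup is positive and mass-preserving) for the leftover fractional time, gives $\|e^{tL_{\bar r}}\mu\|_{TV}\le C_0\,e^{-\lambda t}\|\mu\|_{TV}$ with $\lambda=-t_0^{-1}\log(1-\beta)$ and $C_0=(1-\beta)^{-1}$. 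In particular $e^{t_0 L_{\bar r}}$ is a strict contraction on the complete metric space of probability measures under the TV distance, so by Banach's fixed point theorem it has a unique fixed point $\bar n^*$, which by commutativity of the semigroup is the unique invariant probability measure; solving $\p_a n^*+S(a,\bar r)n^*=0$ with $\int_0^\infty n^*\da=1$ identifies it as $\bar n^*(a)=\bar r^*e^{-\int_0^a S(s,\bar r)\ds}$ with $\bar r^*=\big(\int_0^\infty e^{-\int_0^a S(s,\bar r)\ds}\,\da\big)^{-1}$ (finite under Hypothesis \ref{hyp:S}), and one verifies the boundary identity $\bar r^*=\int_0^\infty S(a,\bar r)\bar n^*(a)\da$ using $\int_0^\infty S(s,\bar r)\ds=\infty$. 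Applying the contraction estimate to $\mu=n^0-\langle n^0\rangle\bar n^*$, which has zero mass and satisfies $e^{tL_{\bar r}}\mu=e^{tL_{\bar r}}n^0-\langle n^0\rangle\bar n^*$, gives the first inequality of \eqref{exp-conv-linear}; the second follows by writing $r(t)-\langle n^0\rangle\bar r^*=\int_0^\infty S(a,\bar r)\big(e^{tL_{\bar r}}n^0-\langle n^0\rangle\bar n^*\big)(a)\da$ and bounding by $\overline S$ times the TV-distance, absorbing $\overline S$ into the constant.

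I expect the only genuinely delicate point to be the uniform-in-$n^0$ minorization: one has to exploit that the lower bound on $S$ at large ages forces a definite proportion of the conserved total mass to fire within a fixed time window, however concentrated $n^0$ is, so that $n(t_0,\cdot)$ is bounded below near $a=0$. A minor but essential bookkeeping point is that the Doeblin contraction improves the TV distance only between measures of \emph{equal} mass, which is why the estimate must be phrased in terms of the mass-zero combination $n^0-\langle n^0\rangle\bar n^*$ rather than $n^0$ and $\bar n^*$ individually.
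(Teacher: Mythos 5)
Your proof is correct, and it follows the Doeblin-minorization route that the paper itself singles out (the paper does not prove \Cref{doeblin-conv} but cites it as provable by the entropy method, Doeblin's theory as in \cite{canizo2019asymptotic}, or Kato's inequality). The key minorization step --- lower-bounding the surviving mass at ages $\geq \sigma$ to get a uniform lower bound on $r(s)$ over $[\sigma,t_0]$, then pushing that through the characteristics to get $n(t_0,\cdot)\geq\gamma\,\mathds{1}_{(0,t_0-\sigma)}$ --- is exactly the one used in that reference, and the rest is the standard Harris/Doeblin contraction, so your argument matches the intended approach.
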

We remark that the constant $\lambda$ gives the natural speed of convergence to equilibrium of \eqref{eqlinear}. This result can be proved through different techniques such
as the entropy method \cite{perthame2006transport}, Doeblin's theory
\cite{canizo2019asymptotic} and Kato's inequality
\cite{mischler2018weak}.
 
\medskip Concerning the nonlinear case, global well-posedness of weak solutions has been studied in the case with instantaneous transmission and also distributed delay \cite{pakdaman2009dynamics,chevallier2015mean,mischler2018weak,canizo2019asymptotic} and more recently in \cite{sepulveda2023well} with a numerical scheme inspired in fixed-point problems.

Regarding long-time behavior, global results are comparatively rare: no general results on convergence to equilibrium are available, and no useful entropy or Lyapunov functional is known for the nonlinear model. Some partial results in this direction include \cite{torres2021elapsed}, where the existence of periodic solutions with jump discontinuities was established in the case of strong non-linearities.

However, a quite complete analysis can be carried out in perturbative
situations, when the system is close to a linear system. 
In this regard, the following properties are expected  to hold:
\begin{enumerate}
\item There exists a unique probability equilibrium $n^*$, with its associated firing rate $r^*$.
\item All solutions with an initial  probability distribution converge to this equilibrium  as $t \to +\infty$ at an exponential rate.
\end{enumerate}
It is reasonable to study these properties when the nonlinearity is
weak, meaning that the following holds for a small enough $\ell$:
\begin{equation}
  \label{eq:2}
  |S(a, r) - S(a,r')| \leq \ell |r-r'|
  \qquad \text{for all $a, r, r' > 0$.}
\end{equation}
In previous papers a very similar condition to this is always used. In
the literature on neuron dynamics this condition corresponds to either
``weak connectivity'' or ``strong connectivity'' regimes. To
understand the meaning of weak and strong regimes it is important to
notice that the firing coefficient is usually written as
$\widetilde{S}(a,JX)$ in other references, where $J\ge0$ is the
network connectivity parameter.  We have avoided this notation to
simplify the presentation of the model, so we do not have a parameter
$J$; we just take $S(a, X) \equiv \widetilde{S}(a,JX)$. When using
this notation, ``weak connectivity'' corresponds to small $J$, and
``strong connectivity'' corresponds to large $J$. With appropriate
(additional) assumptions on $\widetilde{S}$, one may show \eqref{eq:2} (or a very
similar condition to \eqref{eq:2}) either when $J$ is small enough, or
when $J$ is large enough.

Results on properties 1 and 2 were first given in
\cite{pakdaman2009dynamics,pakdaman2013relaxation,pakdaman2014adaptation}
by using variations of the generalized relative entropy method
\cite{michel2005general,perthame2006transport} in the case of
instantaneous transmission \eqref{no-delay-intro}, while a semigroup
approach based on Doeblin's theory
\cite{bansaye2020ergodic,gabriel2018measure} was given in
\cite{canizo2019asymptotic}, applicable to both equation
\eqref{no-delay-intro} and modified models with fatigue proposed in
\cite{pakdaman2014adaptation}. The same ideas were also used to study
a model structured by additional past discharge times in
\cite{torres2022multiple} and with a memory term in
\cite{fonte2022long}.

Besides the case with instantaneous transmission, exponential
convergence with distributed delay has been previously studied by
Mischler et al. \cite{mischler2018,mischler2018weak} for weak
nonlinearities under regularity assumption such as when the firing
coefficient $S\in\mathop{\textrm{Lip}}_X\,L^1_a$. This result was
proved through a spectral analysis based on the analysis in
\cite{mischler2016spectral} for the growth-fragmentation equation.

The goal of our article is to fill some gaps on the convergence to the
equilibrium for the elapsed-time model for both distributed and
discrete delays under the regime of weak nonlinearities with an
alternative method to the spectral analysis previously cited and under
simple assumptions for $S$. Apart from introducing a relatively simple
method, we are also able to include some new cases such as the case of
algebraic decay of $\alpha$. In addition, the case with discrete delay, despite
being formally a particular case of the distributed delay, should be
analyzed separately, since it is a singular case and no explicit proof
was given before for this case.

Our approach relies on a comparison principle for integral equations
involving the distance to equilibrium of the total activity
$|X(t)-X^*|$ in the case of Equation \eqref{distri-delay-intro} and
$|r(t)-r^*|$ in the case of Equation \eqref{single-delay-intro}. The
strategy consists in finding a suitable upper solution of a
Volterra-type equation that vanishes when $t\to\infty$ and that allows
to bound the quantities $|X(t)-X^*|$ and $|r(t)-r^*|$. Comparisons
techniques for other age-structured models have been recently studied
in \cite{deng2020analysis} with logistic growth and spatial diffusion.

The advantage of this argument is that we obtain a simpler proof of convergence to equilibrium, whose rate also depends explicitly on the bounds of the kernel $\alpha$ and the delay $d$ in their respective cases. Moreover, we also point out that suitable modifications of the argument based on a perturbation of the linear case stated in Proposition \ref{doeblin-conv} can also deal
with the delayed equations \eqref{distri-delay-intro} and
\eqref{single-delay-intro}, and get the desired property 2 above.

\subsection{Main results of this article}

We now present the results of this paper, highlighting the
crucial role played by the size $\lambda$ of the \emph{spectral gap}
of the linear equation \eqref{eqlinear}, with $\bar{r}=r^*$, given in 
\Cref{doeblin-conv}. We will always assume the
following:
\begin{hyp}[Conditions on $S$]
  \label{hyp:S}
  We assume $S\colon(0,+\infty) \times [0,+\infty) \to [0,+\infty)$ is a
  bounded measurable function, 
  and 
  Lipschitz with respect to its second variable with Lipschitz constant $l$:
  \begin{equation*}
    |S(a, r) - S(a,r')| \leq \ell |r-r'|
    \qquad \text{for all $a, r, r'>0$.}
  \end{equation*}
  We also assume that there exist constants $s_0,\sigma>0$ such that
  \begin{equation}
    \label{boundS}
    S(a,r) \geq s_0\1_{\{a>\sigma\}}
    \qquad \text{for all $a,r\ge 0$.}
  \end{equation}
\end{hyp}

\begin{hyp}[Initial conditions]
  \label{hyp:initial_condition}
  We assume that $n^0$ is a nonnegative probability measure on
  $(0,+\infty)$. For the distributed delay equation
  \eqref{distri-delay-intro} we assume that $r^0 \,: (-\infty,0] \to
  [0,+\infty)$ is a bounded function; for the single discrete delay
  equation \eqref{single-delay-intro} we assume that $d > 0$ and $r^0 \,: [-d,0] \to
  [0,+\infty)$ is a bounded function.
\end{hyp}

It is also known, in general, \cite{pakdaman2009dynamics,
  pakdaman2013relaxation, canizo2019asymptotic} that in either weak or strong
connectivity regime the nonlinear problems \eqref{distri-delay-intro}
and \eqref{single-delay-intro} have a unique probability equilibrium:
there exists $\ell_* > 0$ such that if $S$ satisfies \Cref{hyp:S} with
$0 \leq \ell \leq \ell_*$ then equations \eqref{distri-delay-intro}
and \eqref{single-delay-intro} have a unique equilibrium $(n^*, r^*)$
such that $n^*$ is a probability measure. Since our results below are
stated for small $\ell$ one may always assume that
$\ell \leq \ell_*$, so the fact that there is a unique equilibrium in
that case is known. The results presented in this article are still valid when $S$ satisfies similar Lipschitz estimates involving the integral with respect to $a$, as it was done for example in \cite{mischler2018,mischler2018weak}. Furthermore, see Remarks \ref{rmkDiscret} and \ref{rmkDistri} 
for more details on how to apply our main results in the context of weak and strong regimes.

The following are the main results of this article. Regarding the single discrete delay model we have:

\begin{thm}[Single discrete delay]
  \label{delay-conv-thm}
  Assume Hypothesis \ref{hyp:S}, with $\ell$ small enough such that 
  there exists a unique steady state $(n^*,r^*)$ of equation
    \eqref{single-delay-intro}, and let $\lambda > 0$ be the spectral
    gap of the linear equation \eqref{eqlinear}, with $\bar{r}=r^*$.
    Then there exists $\ell_0 > 0$ (depending only on
  $\lambda$) such that for all $d>0$ there exist constants $0<\mu<\lambda$ (depending only on $d$ and $\lambda)$, $C>0$ so that when $\ell\le\ell_0$ any initial condition $(n^0, r^0)$ satisfying Hypothesis \ref{hyp:initial_condition}, the solution
    $(n,r)$ of equation \eqref{single-delay-intro} satisfies
    \begin{equation}
      \label{conv-delay-intro}
      \begin{aligned}
        &\|n(t)-n^*\|_{TV}\le C K_0 e^{-\mu t},
        \\
        &|r(t)-r^*|\le C K_0 e^{-\mu t}
      \end{aligned}
    \end{equation}
    for all $t > 0$, where $K_0$ measures the initial distance to
    equilibrium in the following sense:
    \begin{equation*}
      K_0 := \|r^0 - r^*\|_{\infty} + \|n^0 - n^*\|_{TV}.
    \end{equation*}
 
\end{thm}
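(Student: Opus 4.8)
The plan is to transform the nonlinear equation \eqref{single-delay-intro} into a scalar Volterra-type inequality for the quantity $u(t) := |r(t) - r^*|$, and then to construct an explicit exponentially decaying supersolution. First I would use Duhamel's formula (the variation-of-constants representation) for the transport equation \eqref{single-delay-transport-intro}, treating the term $S(a, r(t-d)) - S(a, r^*)$ as a perturbation of the linear operator $L_{r^*}$ and the firing-rate boundary condition as a source. Concretely, writing $n(t) = e^{tL_{r^*}} n^0 + \int_0^t e^{(t-\tau)L_{r^*}} \big( \text{perturbation}[n(\tau), r(\tau - d)] \big) \d\tau$, and then applying the integral functional $n \mapsto \int_0^\infty S(a, r^*) n(t,a) \da$ to extract $r(t)$. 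Using \Cref{doeblin-conv} for the linear semigroup (with $\bar r = r^*$, so $\langle n^0 \rangle \bar n^* = n^*$ and $\bar r^* = r^*$), the homogeneous part contributes a term bounded by $C_0 e^{-\lambda t}\|n^0 - n^*\|_{TV}$, while the perturbation, being controlled by the Lipschitz constant $\ell$ via Hypothesis \ref{hyp:S}, produces a term of size $\ell \int_0^t e^{-\lambda(t-\tau)}\big( |r(\tau-d) - r^*| + \|n(\tau) - n^*\|_{TV} \big) \d\tau$. Since $\|n(\tau)-n^*\|_{TV}$ can in turn be bounded by the same kind of convolution in $u$ (plus the decaying initial term), one closes the estimate to obtain
\begin{equation*}
  u(t) \le C_0 e^{-\lambda t} K_0 + \ell \, C_1 \int_0^t e^{-\lambda(t-\tau)}\, \sup_{[\tau-d,\,\tau]} u \;\d\tau
\end{equation*}
for $t > 0$, with $u(t) = \|r^0 - r^*\|_\infty$-controlled on $[-d,0]$, and a completely analogous integral bound for $\|n(t)-n^*\|_{TV}$.

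Next I would run a comparison-principle argument. I would look for a supersolution of the form $\bar u(t) = A K_0 e^{-\mu t}$ with $0 < \mu < \lambda$ to be chosen, and plug it into the right-hand side of the Volterra inequality. The key computation is that $\sup_{[\tau-d,\tau]} \bar u = A K_0 e^{-\mu(\tau - d)} = e^{\mu d}\bar u(\tau)$, so that
\begin{equation*}
  C_0 e^{-\lambda t} K_0 + \ell C_1 e^{\mu d} \int_0^t e^{-\lambda(t-\tau)} A K_0 e^{-\mu\tau}\,\d\tau
  = C_0 e^{-\lambda t} K_0 + \frac{\ell C_1 e^{\mu d} A K_0}{\lambda - \mu}\big( e^{-\mu t} - e^{-\lambda t}\big),
\end{equation*}
and this is $\le A K_0 e^{-\mu t}$ as soon as $A \ge C_0$ (absorbing the $e^{-\lambda t}$ term, which is negative in the bracket anyway after rearrangement) and $\ell C_1 e^{\mu d}/(\lambda - \mu) \le 1/2$, say. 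Choosing first $\mu = \lambda/2$ fixes $e^{\mu d}$ as a function of $d$ and $\lambda$ only, and then the smallness condition on $\ell$ reads $\ell \le \ell_0 := (\lambda/2)/(2 C_1 e^{\lambda d/2})$; note $\ell_0$ depends only on $\lambda$ and $d$ as required — or, if one wants $\ell_0$ depending only on $\lambda$, one can instead allow $\mu$ to shrink with $d$. A standard continuation/bootstrap argument (or a direct Grönwall-type iteration on $\sup_{[0,T]} e^{\mu t} u(t)$) then shows $u(t) \le A K_0 e^{-\mu t}$ for all $t \ge 0$, giving the second estimate in \eqref{conv-delay-intro}. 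The bound on $\|n(t) - n^*\|_{TV}$ follows by feeding this decay of $u$ back into the Duhamel estimate for $n$.

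The main obstacle I expect is handling the interplay between the two norms — $|r(t) - r^*|$ and $\|n(t) - n^*\|_{TV}$ — cleanly, so that the system of two coupled Volterra inequalities genuinely closes with a single decay rate $\mu$ and constants that depend only on the advertised quantities. In particular one must be careful that the perturbation term involves $n(\tau)$ at the same time $\tau$ (not a delayed time), so the bound on $\|n(\tau)-n^*\|_{TV}$ must itself already be in convolution form before substitution; iterating the two inequalities once to decouple them, and checking that the resulting kernel is still an $L^1$ exponentially-decaying convolution kernel with total mass $O(\ell)$, is the delicate bookkeeping step. A secondary technical point is the rigorous justification of the Duhamel formula at the level of measures in $\mathcal{M}(\R^+)$ and the measurability/boundedness of $r(t)$ needed to make the convolution $r(t-d)$ well defined for $t \in (0,d)$ using the initial history $r^0$; these are addressed by the well-posedness theory cited in the introduction and by Hypothesis \ref{hyp:initial_condition}.
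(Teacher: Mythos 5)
Your proposal is correct and follows essentially the same route as the paper: Duhamel's formula around the linear operator $L_{r^*}$, the spectral gap of Proposition \ref{doeblin-conv}, a delayed Volterra-type inequality for $u(t)=|r(t)-r^*|$, an explicit supersolution $A e^{-\mu t}$, and a comparison/iteration step (the paper packages this as Lemma \ref{comparison-delay-simple}), with the same resolution of the $d$-dependence by letting $\mu$ shrink with $d$ so that $\ell_0$ depends only on $\lambda$. The only slips are cosmetic: your closed inequality omits the non-convolution delayed term $\ell\,u(t-d)$ coming directly from the boundary condition (it is handled identically by the supersolution, at the cost of the extra smallness condition $\ell e^{\mu d}<1$), and the constant threshold should be $A\ge 2C_0$ rather than $A\ge C_0$.
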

We notice that in this case $\|r^0 - r^*\|_\infty$ denotes the
$L^\infty$ norm in the interval $[-d,0]$.

\medskip The previous theorem informally states that in the
weak-connectivity regime, the nonlinear model
\eqref{single-delay-intro} converges to equilibrium at essentially the
same rate as the linear system. We can also obtain similar results for
the distributed delay model \eqref{distri-delay-intro}, with the
important difference that solutions will now converge to equilibrium
at (roughly) the slowest of the following rates:
\begin{enumerate}
\item The rate $e^{-\lambda t}$ of decay to equilibrium of the linear model.
  
\item The decay rate to $0$ of the function $\alpha$.
\end{enumerate}
The following two results make this idea precise:

\begin{thm}[Exponentially distributed delay]
  \label{distri-conv-thm}
  Assume Hypothesis \ref{hyp:S}, with $\ell$ small enough such that 
  there exists a unique steady state $(n^*,r^*)$ of equation
    \eqref{distri-delay-intro}, and let $\lambda > 0$ be the spectral
    gap of the linear equation \eqref{eqlinear}, with $\bar{r}=r^*$. Assume that there exist
  constants $C_\alpha,\beta>0$ such that
  $$
  \alpha(t)\le C_\alpha e^{-\beta t}
  \qquad \text{for all $t > 0$.}
  $$
  Then, for any $0 < \mu < \min\{\lambda, \beta\}$ there exists
  $\ell_0 > 0$ depending only on $\|S\|_\infty$ and $\mu$ such that if
  $\ell \le \ell_0$,  there exists a constant $C > 0$ (depending only on $S$,
    $C_\alpha$ and $\beta$) such that for any initial condition
    $(n^0, r^0)$ satisfying Hypothesis \ref{hyp:initial_condition} the
    solution $(n,r)$ of equation \eqref{distri-delay-intro} satisfies
      \begin{align}
        \label{conv-delay-distri-exp}
        &\|n(t)-n^*\|_{TV}\le C K_0 e^{-\mu t},
        \\
        \label{conv-delay-distri-exp-r}
        &|r(t)-r^*|\le C K_0 e^{-\mu t},
        \\
        \label{conv-delay-distri-exp-X}
        &|X(t)-X^*|\le C K_0 e^{-\mu t}
    \end{align}
    for all $t > 0$, where $K_0$ measures the initial distance to
    equilibrium in the following sense:
    \begin{equation*}
      K_0 := \|r^0 - r^*\|_{\infty} + \|n^0 - n^*\|_{TV}.
    \end{equation*}
\end{thm}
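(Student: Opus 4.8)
\emph{Proof plan.} The strategy is to collapse the three coupled quantities
$N(t):=\|n(t)-n^*\|_{TV}$, $\rho(t):=|r(t)-r^*|$ and $u(t):=|X(t)-X^*|$ into a single
scalar Volterra inequality for $u$ whose integral operator is small (of order $\ell$) in an
exponentially weighted space, and then to exhibit an explicit exponential supersolution.
All three bounds in \eqref{conv-delay-distri-exp}--\eqref{conv-delay-distri-exp-X} will follow
once a decay estimate for $u$ is obtained.

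First I would set up a Duhamel representation. Writing the nonlinear transport equation as
$\partial_t n = L_{r^*}[n] + (L_{X(t)}-L_{r^*})[n]$, where
\[
(L_{X(t)}-L_{r^*})[n] = -\bigl(S(\cdot,X(t))-S(\cdot,r^*)\bigr)n
+ \delta_0\int_0^\infty\bigl(S(a,X(t))-S(a,r^*)\bigr)n(t,a)\,\da,
\]
we get $n(t)=e^{tL_{r^*}}n^0+\int_0^t e^{(t-\tau)L_{r^*}}(L_{X(\tau)}-L_{r^*})[n(\tau)]\,\d\tau$.
Two facts let \Cref{doeblin-conv} apply cleanly: the nonlinear equilibrium
$n^*=r^*e^{-\int_0^aS(s,r^*)\ds}$ coincides with the linear one $\bar n^*$ for $\bar r=r^*$
(because $r^*I(r^*)=1$), and $L_{r^*},L_{X(\tau)}$ are both conservative, so the perturbation
has zero total mass and the contraction in \eqref{exp-conv-linear} applies to it without any
projection. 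Using $|S(a,X)-S(a,r^*)|\le\ell u$ and $\|n(\tau)\|_{TV}=1$ gives
$\|(L_{X(\tau)}-L_{r^*})[n(\tau)]\|_{TV}\le 2\ell\,u(\tau)$, hence from both lines of
\eqref{exp-conv-linear} (the second applied to the linear functional
$\mu\mapsto\int_0^\infty S(a,r^*)\,\mathrm{d}\mu$, which yields $r(t)$ up to the error of
replacing $S(\cdot,r^*)$ by $S(\cdot,X(t))$ in $r(t)=\int_0^\infty S(a,X(t))n(t,a)\,\da$),
\[
N(t),\ \ \rho(t)-\ell\,u(t)\ \ \le\ \ C_0 e^{-\lambda t}\|n^0-n^*\|_{TV}
+2\ell C_0\int_0^t e^{-\lambda(t-\tau)}u(\tau)\,\d\tau .
\]

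Next I would close the loop through the delay. Since $X^*=r^*$ and $\int_0^\infty\alpha=1$,
one has $X(t)-X^*=\int_{-\infty}^t\alpha(t-s)(r(s)-r^*)\,\ds$; splitting at $s=0$, bounding
the pre-history by $\|r^0-r^*\|_\infty$ (Hypothesis \ref{hyp:initial_condition}) and using
$\alpha(t)\le C_\alpha e^{-\beta t}$ gives
\[
u(t)\le\frac{C_\alpha}{\beta}\,\|r^0-r^*\|_\infty\, e^{-\beta t}
+C_\alpha\int_0^t e^{-\beta(t-s)}\rho(s)\,\ds .
\]
Inserting the bound for $\rho$ produces an inequality $u\le g+\mathcal K[u]$, where $\mathcal K$
is a nonnegative Volterra operator (a combination of convolutions against $e^{-\beta\cdot}$ and
$e^{-\beta\cdot}\!*\!e^{-\lambda\cdot}$) carrying a prefactor $\ell$, and $g(t)\le C_1K_0e^{-\mu t}$
for any fixed $\mu<\min\{\lambda,\beta\}$ after estimating the remaining convolutions — which are
$O(e^{-\mu t})$, the borderline $\lambda=\beta$ only producing an extra polynomial factor absorbed
by lowering the exponent to $\mu$. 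A direct computation in the weighted norm
$\|f\|_\mu:=\sup_{t\ge0}e^{\mu t}|f(t)|$ gives $\|\mathcal K\|_\mu\le \ell\,c$ with $c$ depending on
$\|S\|_\infty$, $C_0$, $\lambda$, $C_\alpha$, $\beta$, $\mu$, so there is $\ell_0>0$ with
$\|\mathcal K\|_\mu<1$ whenever $\ell\le\ell_0$. For such $\ell$ the function
$\bar u(t):=\frac{C_1}{1-\|\mathcal K\|_\mu}K_0 e^{-\mu t}$ satisfies $\bar u\ge g+\mathcal K[\bar u]$,
and since $u$ is globally bounded ($r(t)\le\|S\|_\infty$, hence $X$ and $u$ are bounded), the
comparison principle for Volterra inequalities with nonnegative kernel yields
$u(t)\le\bar u(t)\le CK_0e^{-\mu t}$, i.e.\ \eqref{conv-delay-distri-exp-X}. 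Feeding this back into
the estimates above and using $\mu<\lambda$ (so $e^{-\lambda\cdot}\!*\!e^{-\mu\cdot}=O(e^{-\mu t})$)
gives \eqref{conv-delay-distri-exp-r} and then \eqref{conv-delay-distri-exp}.

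The main obstacle is exactly this decoupling step: turning the mutually referential estimates for
$N$, $\rho$ and $u$ into one \emph{closed} Volterra inequality for $u$ alone whose kernel is genuinely
$O(\ell)$ in the exponentially weighted space. This forces one to propagate the exact exponential
rates through the nested convolutions, to live with any rate $\mu$ strictly below $\min\{\lambda,\beta\}$
(the slower of the linear spectral gap and the tail rate of $\alpha$), and to check carefully the
hypotheses of the Volterra comparison principle — for which one needs the a priori boundedness and
global well-posedness of the nonlinear system, available from the cited literature. The remaining
ingredients (the Duhamel formula and the application of \Cref{doeblin-conv} to a zero-mass
perturbation) are routine.
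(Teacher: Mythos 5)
Your proposal is correct and follows essentially the same strategy as the paper: writing the nonlinear system as a perturbation of the frozen-at-equilibrium linear generator $L_{r^*}$, applying Duhamel's formula and the spectral gap of \Cref{doeblin-conv} to bound $\|n(t)-n^*\|_{TV}$ and $|r(t)-r^*|$ in terms of $|X-X^*|$, closing the loop via the delay kernel, and exhibiting an exponential supersolution of the resulting Volterra inequality (the paper states this comparison as Lemma~\ref{comparison-delay-distri}, while you phrase it as $\|\mathcal K\|_\mu<1$ in the weighted sup-norm, which is an equivalent formulation). The only cosmetic difference is that the paper makes the supersolution inequality explicit in the parameters $A,\mu$ and reads off the admissible range of $\ell$, whereas you hide the same computation inside the operator-norm bound.
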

In this case $\|r^0 - r^*\|_{\infty}$ denotes the $L^\infty$ norm on
$(-\infty, 0)$. We also point out that $X^* := r^* \int_0^\infty
\alpha(s) \d s$ is the total activity at equilibrium.

Regarding algebraic tails we have a similar result, this time with an
algebraic speed of convergence:

\begin{thm}[Distributed delay, algebraic tail]
  \label{distri-conv-thm-noexp}
  Assume Hypothesis \ref{hyp:S},  with $\ell$ small enough such that 
  there exists a unique steady state $(n^*,r^*)$ of equation
    \eqref{distri-delay-intro}, and let $\lambda > 0$ be the spectral
    gap of the linear equation \eqref{eqlinear}, with $\bar{r}=r^*$.
    Assume that there exist constants $C_\alpha>0$, $\beta>1$ such that
  $$\alpha(t)\le \frac{C_\alpha}{1+t^\beta}.$$
  Then there exists $\ell_0 > 0$ depending only on $S$ such that if
  $\ell \le \ell_0$,  there exists a constant $C > 0$ (depending only on $S$,
    $C_\alpha$ and $\beta$) such that for any initial condition
    $(n^0, r^0)$ satisfying Hypothesis \ref{hyp:initial_condition} the
    solution $(n,r)$ of equation \eqref{distri-delay-intro} satisfies
    \begin{align}
      \label{conv-delay-distri-exp2}
      &\|n(t)-n^*\|_{TV}\le \frac{C K_0}{1+t^{\beta-1}},
      \\
      \label{conv-delay-distri-exp-r2}
      &|r(t)-r^*|\le \frac{C K_0}{1+t^{\beta-1}},
      \\
      \label{conv-delay-distri-exp-X2}
      &|X(t)-X^*|\le \frac{C K_0}{1+t^{\beta-1}}
    \end{align}
    for all $t > 0$, where $K_0$ measures the initial distance to
    equilibrium in the following sense:
    \begin{equation*}
      K_0 := \|r^0 - r^*\|_{\infty} + \|n^0 - n^*\|_{TV}.
    \end{equation*}
\end{thm}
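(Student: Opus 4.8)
The plan is to turn the nonlinear system into a perturbation of the linear equation \eqref{eqlinear} with $\bar r=r^*$, extract from Duhamel's formula a scalar Volterra inequality for the distance of the total activity to equilibrium, and then close that inequality by exhibiting an explicit algebraically decaying supersolution.

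First I would write the transport--boundary operator along the solution as $L_{X(t)}$, split $L_{X(t)}=L_{r^*}+B(t)$ where $B(t)[n]=-(S(\cdot,X(t))-S(\cdot,r^*))n+\delta_0\int_0^\infty(S(a,X(t))-S(a,r^*))n(a)\da$, and note that $B(t)[n]$ has zero total mass and, by \Cref{hyp:S} and mass conservation $\langle n(t)\rangle=1$, satisfies $\|B(t)[n(t)]\|_{TV}\le 2\ell|X(t)-X^*|$ (using $X^*=r^*$). Since $n^*$ coincides with the linear steady state $\bar n^*$ for $\bar r=r^*$ (because $r^*$ solves \eqref{Int-r}), Duhamel's formula gives $n(t)-n^*=e^{tL_{r^*}}(n^0-n^*)+\int_0^t e^{(t-s)L_{r^*}}B(s)[n(s)]\ds$, and \Cref{doeblin-conv} applied to the zero-mass data yields
\[
\|n(t)-n^*\|_{TV}\le C_0e^{-\lambda t}\|n^0-n^*\|_{TV}+2C_0\ell\int_0^t e^{-\lambda(t-s)}|X(s)-X^*|\ds,
\]
and the analogous bound for $|r(t)-r^*|$ from the second line of \eqref{exp-conv-linear}. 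On the other hand, \eqref{distri-delay-X-intro} and $X^*=r^*\int_0^\infty\alpha$ give $X(t)-X^*=\int_0^t\alpha(t-s)(r(s)-r^*)\ds+\int_{-\infty}^0\alpha(t-s)(r^0(s)-r^*)\ds$, whose last term is bounded by $K_0\int_t^\infty\alpha(u)\du\le \tfrac{CK_0}{1+t^{\beta-1}}$ precisely because $\beta>1$.

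Combining these, $w(t):=|X(t)-X^*|$ satisfies a closed Volterra inequality
\[
w(t)\le h(t)+2C_0\ell\int_0^t K(t-s)\,w(s)\ds,\qquad K:=\alpha\ast e^{-\lambda\,\cdot},
\]
with $h(t)\le \tfrac{CK_0}{1+t^{\beta-1}}$, $\int_0^\infty K=\lambda^{-1}$, and — the point that makes the argument work — $K(t)\le \tfrac{C}{1+t^{\beta}}$, obtained by splitting the convolution at $t/2$ so that the algebraic tail of $\alpha$ is preserved. I would then close this with the comparison principle for Volterra inequalities with nonnegative kernel: try the ansatz $\bar w(t)=\tfrac{AK_0}{1+t^{\beta-1}}$; the key estimate is $\int_0^t K(t-s)\,(1+s^{\beta-1})^{-1}\ds\le C_2(1+t^{\beta-1})^{-1}$, which holds because $K$ is integrable with tail $t^{-\beta}$ decaying faster than $t^{-(\beta-1)}$ (again a split of $[0,t]$ at $t/2$). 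Granting this, $\bar w$ is a supersolution once $\ell_0$ is chosen so small that $2C_0\ell_0C_2\le \tfrac12$ and $A\ge 2C$ — so $\ell_0$ depends only on $S$ (through $C_0,\lambda$) and on $C_\alpha,\beta$ through the tail constants — and the comparison principle (valid since $w$ is bounded on bounded intervals by well-posedness) gives $w(t)\le \bar w(t)=\tfrac{AK_0}{1+t^{\beta-1}}$, i.e.\ \eqref{conv-delay-distri-exp-X2}. Substituting this bound into the Duhamel estimates of the first step and using the elementary $\int_0^t e^{-\lambda(t-s)}(1+s^{\beta-1})^{-1}\ds\le C(1+t^{\beta-1})^{-1}$ then yields \eqref{conv-delay-distri-exp2} and \eqref{conv-delay-distri-exp-r2}.

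The main obstacle I anticipate is the convolution tail lemma that makes the algebraic ansatz a genuine supersolution, namely controlling $K\ast(1+\cdot^{\,\beta-1})^{-1}$ by a constant multiple of $(1+t^{\beta-1})^{-1}$ while keeping every absorbed constant dependent only on the allowed data; the hypothesis $\beta>1$ (as opposed to a stronger integrability assumption) is used exactly here, to guarantee that both $h$ and this convolution decay precisely at the rate $t^{-(\beta-1)}$ claimed in the statement. A secondary, more routine point is justifying the comparison principle and the Duhamel representation at the regularity of measure-valued solutions, for which the semigroup estimates of \Cref{doeblin-conv} and the existing well-posedness theory suffice.
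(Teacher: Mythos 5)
Your proposal is correct and follows essentially the same route as the paper: the same Duhamel/perturbation setup around $L_{r^*}$, the same Volterra inequality for $|X(t)-X^*|$, the same algebraic supersolution $A/(1+t^{\beta-1})$ closed via a tail estimate for convolutions (the paper's Lemma~\ref{decay-conv}, which you re-derive by splitting at $t/2$), and the same comparison principle. The only slip is that your combined inequality for $w$ drops the term $\ell(\alpha\ast w)$ coming from the direct $\ell|X(t)-X^*|$ contribution to $|r(t)-r^*|$; it is harmless since that kernel has the same integrable algebraic tail and is absorbed identically.
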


This result allows to extend the convergence result in \cite{mischler2018,mischler2018weak} where the kernel $\alpha$ must have a Laplace transform $\widehat{\alpha}(z)$ defined for $\Re(z)>-c$ for some $c>0$, i.e. $\alpha$ decays exponentially. Thus, even if $\alpha$ decays like an inverse of a polynomial, it is still possible to have convergence to the equilibrium with explicit rates that depend on the bounds of $\alpha$.

The proof of the above results is based on a perturbation argument,
writing the nonlinear equations as the linear one plus a perturbation
term which can be shown to be small, and then using Duhamel's formula
to compare with the solution of the linear equation. There are two
important ideas to consider in order to carry out this plan: first,
it is natural to consider the spectral gap in total variation
norm, as the perturbation term is small in this norm (but is not even
finite in stronger norms such as $L^p$); this was used in
\cite{canizo2019asymptotic} in order to study the case without
delay. Second, the inequalities obtained after using Duhamel's
formula are modified versions of Volterra integral equations for which
there is no general theory readily available. We give comparison
theorems for them, from which one can then obtain the main results.

The rest of the paper is devoted to proving the convergence theorems and offering remarks and perspectives that emerge from them. It is organized as follows: Section~\ref{sec:single-delay} contains
the proof of Theorem \ref{delay-conv-thm}, while Section \ref{sec:distri-delay}
contains the proofs of Theorems \ref{distri-conv-thm} and
\ref{distri-conv-thm-noexp}.

\section{Model with a single discrete delay: Proof of Theorem \ref{delay-conv-thm}}
\label{sec:single-delay}

This section is devoted to the elapsed time equation with a single
discrete delay given in \eqref{single-delay-intro}:
\begin{subequations}
   \label{single-delay}
  \begin{align}
    \label{single-delay-transport}
    &\partial_t n +\partial_a n + S(a, r(t-d))n=0,
    && t,a>0,
    \\
    \label{single-delay-boundary}
    &n(t,a=0) = r(t) \coloneqq
      \int_0^\infty S(a,r(t-d)) n(t,a)\da,
    && t>0.
  \end{align}
  \end{subequations}

We remind that the steady states $(n^*,r^*)$ in this case are given by:
\begin{equation*}
    \begin{cases}
           \partial_a n^* + S(a,r^*)n^*=0 & a>0,\vspace{0.1cm}\\
            r^*\coloneqq n^*(a=0)=\int_0^\infty S(a,r^*)n^*(a)\,da,&\\
    \end{cases}
\end{equation*}
where $n^*(a)=r^*e^{-\int_0^a S(a',r^*) \da'}$ and $r^*>0$ satisfies Equation \eqref{Int-r}.

The aim of this section is to prove Theorem \ref{delay-conv-thm}.
To achieve this, we make use of the following comparison lemma.

\begin{lem}[Comparison lemma with discrete delay]
  \label{comparison-delay-simple}
  Consider the constants $d>0,c_1\ge0,c_2\ge0$ and the functions
  $f\in L^\infty(0,\infty),\,u^0\in L^\infty(-d,0)$. Let
  $\underline{u}\in L^\infty(-d,\infty)$ such that
  \begin{equation}
    \begin{cases}
      \underline{u}(t)\le c_1 \underline{u}(t-d)
      + c_2 \int_0^t e^{-\lambda (t-s)}\underline{u}(s-d)\ds
      +f(t)& \forall t>0,\\
      \underline{u}(t)\le u^0(t) & \forall t\in(-d,0),
    \end{cases}
  \end{equation}
  and $\overline{u}\in L^\infty(-d,\infty)$ such that 
  \begin{equation}
    \begin{cases}
      \overline{u}(t)\ge c_1 \overline{u}(t-d)
      + c_2 \int_0^t e^{-\lambda (t-s)}\bar{u}(s-d) \ds+f(t)& \forall t>0,\\
      \overline{u}(t)\ge u^0(t) &\forall t\in(-d,0),
    \end{cases}
  \end{equation}
  Then $\underline{u}(t)\le\overline{u}(t)$ for all $t>-d$.
\end{lem}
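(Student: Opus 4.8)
The plan is to prove the comparison lemma by setting $w(t) \coloneqq \overline{u}(t) - \underline{u}(t)$ and showing $w(t) \ge 0$ for all $t > -d$. From the two systems of inequalities, subtracting gives, for $t > 0$,
\begin{equation*}
  w(t) \ge c_1 w(t-d) + c_2 \int_0^t e^{-\lambda(t-s)} w(s-d)\,\mathrm{d}s,
\end{equation*}
together with $w(t) \ge 0$ on $(-d,0)$. Since $c_1, c_2 \ge 0$ and the kernel $e^{-\lambda(t-s)}$ is nonnegative, the right-hand side is a monotone (order-preserving) operator in $w$: if $w$ is nonnegative on $(-d, t-d)$ then the right-hand side at time $t$ is $\ge 0$, forcing $w(t) \ge 0$. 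The natural way to turn this heuristic into a proof is a continuation/step argument on intervals of length $d$.

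First I would prove the claim on $(-d, 0]$, where it is immediate from the initial conditions. Then I would argue by induction: assuming $w \ge 0$ on $(-d, (k-1)d]$ for some $k \ge 1$, I show $w \ge 0$ on $((k-1)d, kd]$. For $t$ in this interval, the delayed argument $t-d$ lies in $((k-2)d, (k-1)d] \subset (-d, (k-1)d]$, so $w(t-d) \ge 0$ by the induction hypothesis; likewise $w(s-d) \ge 0$ for $s \in (0,t)$ because $s-d < t-d \le (k-1)d$. Hence for every such $t$,
\begin{equation*}
  w(t) \ge c_1 w(t-d) + c_2 \int_0^t e^{-\lambda(t-s)} w(s-d)\,\mathrm{d}s \ge 0,
\end{equation*}
which closes the induction. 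Since $\bigcup_{k \ge 1} ((k-1)d, kd] = (0,\infty)$, this gives $w \ge 0$, i.e. $\underline{u}(t) \le \overline{u}(t)$, for all $t > -d$.

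The argument is essentially routine because the ``delay'' structure already decouples the time axis into blocks of length $d$ on which the inequality is explicit rather than genuinely implicit — there is no Gr\"onwall-type feedback within a single block, only from strictly earlier blocks. The only point requiring a little care is measurability/integrability: one needs $\underline{u}, \overline{u} \in L^\infty(-d,\infty)$ (which is assumed) so that the convolution integral is well-defined and finite on each bounded interval, and one should note the inequalities are understood to hold for a.e. $t$, which is harmless since the induction only uses pointwise (a.e.) nonnegativity of $w$ on earlier intervals and the integral term is insensitive to null sets. I do not anticipate a genuine obstacle here; the main thing to get right is simply the bookkeeping of which delayed times fall into which already-treated interval, and confirming that the base case plus the inductive step indeed exhaust $t > -d$.
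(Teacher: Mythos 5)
Your proof is correct and takes essentially the same approach as the paper: the authors also set $h(t) = \overline{u}(t) - \underline{u}(t)$, observe that the inequality structure propagates nonnegativity forward by blocks of length $d$ (since the delayed arguments always lie in already-treated intervals), and iterate over $(kd,(k+1)d)$. Your write-up is a slightly more explicit version of the same induction, including the measurability remark, but there is no difference in the underlying idea.
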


In other words $\underline{u}$ and $\overline{u}$ are respectively lower and upper solutions of the delayed Vol\-te\-rra-type equation given by
\begin{equation}
\begin{cases}
 u(t)= c_1 u(t-d)
      + c_2 \int_0^t e^{-\lambda (t-s)}u(s-d) \ds+f(t)& \forall t>0,\\
      u(t)= u^0(t) &\forall t\in(-d,0),    
\end{cases}
\end{equation}
and the comparison principle holds.

\begin{proof}
    Observe that $h(t)\coloneqq\overline{u}(t)-\underline{u}(t)$ satisfies the following inequalities
    \begin{equation*}
        \begin{cases}
           h(t)\ge c_1 h(t-d) + c_2 \int_0^t e^{-\lambda (t-s)}h(s-d)ds& \forall t>0,\\
           h(t)\ge 0 & \forall t\in(-d,0).
        \end{cases}
    \end{equation*}
    From the first inequality we conclude that $h(t)\ge0$ for all $t\in(0,d)$ and by iterating over the intervals $(kd,(k+1)d)$ with $k\in\N$, we conclude that $h(t)\ge0$ for all $t>-d$.
\end{proof}

Now we can proceed with the proof of Theorem \ref{delay-conv-thm}.
\begin{proof}[Proof of Theorem \ref{delay-conv-thm}]
  We write the solution of Equation \eqref{single-delay} as
    $$\partial_t n = L_{r^*}[n]+h$$
  where the linear operator $L_{r^*}$ was defined in \eqref{operator-L},
  with $\bar{r}=r^*$, and $h$ is given by
    $$h(t,a)=(S(a,r^*)-S(a,r(t-d)))n(t,a)+\delta_0(a)\int_0^\infty
    (S(a',r(t-d))-S(a',r^*))n(t,a')\d a',$$ and by applying Duhamel's
    formula and \Cref{doeblin-conv}, there exists
    $C_0,\lambda>0$ such that the following inequality holds:
    \begin{equation}
    \label{est-delay-n-n*}
    \|n(t)-n^*\|_{TV}\le C_0e^{-\lambda t}\|n^0-n^*\|_{TV}+C_0\int_0^t
    e^{-\lambda(t-s)}\|h(s)\|_{TV} \ds.
    \end{equation}
    By using the mass-conservation property \eqref{massconservation}
    of the system, for $h$ we obtain
    \begin{equation}
      \label{est-h-delay}
      \|h(t,\cdot)\|_{TV}\le 2 \ell |r(t-d)-r^*|\qquad\forall t>0,
    \end{equation}
    where $\ell$ is the Lipschitz constant of $S$ with respect to $r$
    (see Hypothesis \ref{hyp:S}). Also, from the definition of $r(t)$ (see
    \eqref{single-delay-boundary})
    we obtain
    \begin{multline*}
      |r(t)-r^*|
      =
      \left| \int_0^\infty S(a, r(t-d)) n(t,a) \d a
        - \int_0^\infty S(a, r^*) n^*(a) \d a
      \right|
      \\
      \leq
      \int_0^\infty |S(a, r(t-d)) - S(a,r^*)| n(t,a) \d a
      +
      \int_0^\infty S(a, r^*) |n(t,a) - n^*(a)| \d a
      \\
      \leq
      \ell |r(t-d) - r^*|
      +
      \|S\|_\infty \| n(t,a) - n^*(a) \|_{TV}.
    \end{multline*}
    Now using \eqref{est-delay-n-n*} and \eqref{est-h-delay} in the
    previous equation we get
    \begin{equation}
      \begin{split}
        |r(t)-r^*|
        &\le
        \ell |r(t-d)-r^*|
        + C_0\|S\|_\infty \|n^0-n^*\|_{TV}e^{-\lambda t}
        \\
        & \qquad
        +2C_0\|S\|_\infty
        \ell \int_0^t
        e^{-\lambda(t-s)}|r(s-d)-r^*|\ds
.
        \end{split}
    \end{equation}
    We define the constants
    $\,C_1\coloneqq
    2C_0\|S\|_\infty$ and $\,C_2\coloneqq C_0\|S\|_\infty \|n^0-n^*\|_{TV}$
    so that for $u(t)\coloneqq |r(t)-r^*|$ we get the
    inequality
    $$u(t)\le \ell u(t-d)
    +C_1 \ell \int_0^t e^{-\lambda(t-s)}u(s-d) \ds +C_2 e^{-\lambda
      t}\quad\forall t\ge 0.$$ The main idea is to apply now the
    comparison lemma. We look for a constant $A,\mu>0$ such that we
    get $u(t)\le Ae^{-\mu t}$ for all $t>-d$. This means that the
    function $v(t)\coloneqq Ae^{-\mu t}$ must satisfy the following
    inequalities
    $$\begin{cases}
      v(t)\ge \ell v(t-d)
      +C_1 \ell \int_0^t e^{-\lambda(t-s)}v(s-d) \ds+C_2 e^{-\lambda t}&
      \forall t>0\\
      v(t)\ge |r^0-r^*| &\forall t\in(-d,0),   
    \end{cases}
   $$
   or equivalently in terms of $A$ and $\mu$
    \begin{equation}
    \label{supersol-A-lambda}
        \begin{matrix*}[l]
           \displaystyle A\left(1 - \ell e^{\mu d}-C_1 \ell e^{\mu d}\frac{1-e^{-(\lambda-\mu)t}}{\lambda-\mu}\right)\ge C_2 e^{-(\lambda-\mu)t} & \forall t>0\\
        A\ge e^{\mu t}|r^0(t)-r^*| &\forall t\in(-d,0).   
        \end{matrix*}
    \end{equation}
    
    Observe that (using $e^{-(\lambda-\mu) t} \geq 0$ on the left and
    $e^{-(\lambda-\mu) t} \leq 1$ on the right) a sufficient condition
    to verify \eqref{supersol-A-lambda} is given by the inequalities
    \begin{align*}
      &A \left(
        1- \ell \left(
        e^{\mu d} + C_1e^{\mu d}\frac{1}{\lambda-\mu}
        \right)
        \right)
        \ge C_2
      \\
      &A\ge \sup_{t\in[-d,0]}|r^0(t)-r^*|.  
    \end{align*}
    Therefore, for $\ell > 0$ satisfying 
    \begin{equation*}
    \ell \left(e^{\mu d}+C_1e^{\mu d}\frac{1}{\lambda-\mu}\right)<1,        \:
    \qquad
    \text{or equivalently}
    \qquad
    \ell < \frac{e^{-\mu d}(\lambda-\mu)}{\lambda -\mu+C_1},
    \end{equation*}
    and $A$ verifying
    \begin{equation*}
      A>\max\left\{||r^0-r^*||_\infty,
        \
        \frac{C_2(\lambda-\mu)}
        {\lambda-\mu-\ell_0 e^{\mu d} (\lambda - \mu + C_1 )}
      \right\}
      \qquad
      \text{with $\mu<\lambda$, and $\ell\le\ell_0$,}
\end{equation*}
we get that \eqref{supersol-A-lambda} holds and hence $v(t)$ satisfies
the desired inequalities. By Lemma \ref{comparison-delay-simple} we
conclude that
\begin{equation}
  \label{ut-exp-bounded}
  u(t)=|r(t)-r^*|\le A e^{-\mu t}.
\end{equation}
Without loss of generality we can assume
$\|n^0-n^*\|_{TV}+\|r^0-r^*\|_\infty>0$, so we can choose $A$ of the
form
$A = \widetilde{C}(S,d,\mu) \left(\|n^0-n^*\|_{TV} +
  \|r^0-r^*\|_\infty\right)$ (since $C_2$ is the only constant we
defined which depends on the initial distance $\|n^0-n^*\|_{TV}$).

We now assert that we can find a bound on $\ell$, $\ell_0$, independent of $d$
such that \eqref{ut-exp-bounded} holds for some choice of
$\mu>0$. Indeed, when $d\le1$ we can choose $\mu=\frac{\lambda}{2}$
and set $\ell_0\coloneqq \frac{\lambda e^{-\lambda}}{\lambda+2C_1}$ such
that for
$$\ell \le \ell_0 < \frac{\lambda e^{-\frac{\lambda}{2}}}{\lambda+2C_1}$$
the estimate \eqref{ut-exp-bounded} is verified. Similarly for $d>1$,
if we take $\mu=\frac{\lambda}{d+1}$ such that for
$$\ell \le \ell_0 < \frac{d\lambda e^{-\frac{d}{d+1}\lambda}}{d\lambda+(d+1)C_1},$$
the same conclusion holds. Finally, from estimates \eqref{est-delay-n-n*} and \eqref{est-h-delay} the exponential convergence of $\|n(t)-n^*\|_{TV}$ in
\eqref{conv-delay-intro} readily follows. 
    \end{proof}
In light of the proof, we draw attention to the following remarks.
\begin{rmk}
  We have proved the existence of a sufficiently small connectivity
  parameter $\ell_0$ such that for any transmission delay $d$, we have
  exponential convergence of the system towards its unique steady
  state.  Nevertheless, the rate of convergence is influenced by $d$
  and, as expected, it decreases as $d$ increases.

  An interesting extension would be to jointly study the dependence on the delay $d$ and the spectral gap given $\lambda$ in Proposition \ref{doeblin-conv}. For a given delay $d$, one would expect that a larger value of $\lambda$ will allow a larger value of the Lipschitz constant $\ell_0$ where the exponential convergence holds. The choice of $\ell_0=\frac{\lambda e^{-\lambda}}{\lambda+2C_1}$ obtained in the proof of the theorem is decreasing in terms of $\lambda\gg1$, suggesting that this bound might be improved.
  \end{rmk}

\begin{rmk}[Relaxed hypotheses]
\label{rmkDiscret}
Our proofs can be adapted to work when the following relaxed condition
on $S$ holds, instead of Hypothesis \ref{hyp:S}:
\begin{hyp}[Conditions on $S$]  
  \label{hyp: S-weak-strong}
  We assume $S\colon (0,+\infty) \times [0,+\infty) \to [0,+\infty)$ is a
  bounded measurable function, and let $(n^*, r^*)$ be an equilibrium
  of the linear equation \eqref{eqlinear}. We assume that $S$ is
  Lipschitz with respect to $r$ with constant $\ell$ when $|r-r^*|$ is
  small enough, that is: there exists $\delta > 0$ such that
  \begin{equation*}
    |S(a, r) - S(a,r')| \leq \ell |r-r'|
    \qquad \text{for all $a>0$ and all $r,r' \in [r^*-\delta, r^* + \delta]$.}
  \end{equation*}
  \end{hyp}
  This assumption requires a Lipschitz constant only close to the
  equilibrium $r^*$. With this condition, following the proof of
  Theorem \ref{delay-conv-thm}, we may obtain convergence to the
  equilibrium provided that the initial condition is close enough to
  the equilibrium itself. In this case one does not need to impose
  that the equilibrium is unique.
\end{rmk}

     \section{Model with distributed delay: Proof of Theorems \ref{distri-conv-thm} and
     \ref{distri-conv-thm-noexp}}
\label{sec:distri-delay}

In this section we will consider the elapsed time model with
distributed delay given in \eqref{distri-delay-intro}
\begin{subequations}
  \label{distri-delay}
  \begin{align}
    \label{distri-delay-transport}
    &\partial_t n +\partial_a n + S(a,X(t))n=0,
    && t,a>0,
    \\
    \label{distri-delay-boundary}
    &n(t,a=0) = r(t) \coloneqq
      \int_0^\infty S(a,X(t))n(t,a)\da,
    && t>0,
    \\
    \label{distri-delay-X}
    &X(t)=\int_{-\infty}^t \alpha(t-s)r(s)\ds.
    && t>0.
  \end{align}
  \end{subequations}

Remind that in this case the equilibrium
distribution $(n^*, r^*)$ solves the system
\begin{equation*}
    \begin{cases}
           \partial_a n^* + S(a,X^*)n^*=0 & a>0,\vspace{0.1cm}\\
            r^*\coloneqq n^*(a=0)=\int_0^\infty S(a,X^*)n^*(a)\,da,&\vspace{0.1cm}\\
            X^*=r^*\int_0^\infty\alpha(s)\,ds.
    \end{cases}
\end{equation*}
where $n^*(a)=r^*e^{-\int_0^a S(a',r^*) \da'}$ and $r^*>0$ satisfies Equation \eqref{Int-r}.

For the proof of Theorems \ref{distri-conv-thm} and
\ref{distri-conv-thm-noexp} we first need the following comparison
lemma. For $f,g\in L^\infty(0,\infty)$ we use the notation
$f*g\coloneqq \int_0^t f(t-s)g(s) \d s$.

\begin{lem}
\label{comparison-delay-distri}
    Consider the functions $f,k\in L^\infty(0,\infty)$ with $k$ nonnegative. Let $\underline{u}\in L^\infty(0,\infty)$ such that
   \begin{equation}
         \underline{u}(t)\le(k*\underline{u})(t)+f(t)\qquad\forall t>0.
    \end{equation}
   and $\overline{u}\in L^\infty(0,\infty)$ such that 
   \begin{equation}
            \overline{u}(t)\ge (k*\overline{u})(t)+f(t)\qquad\forall t>0.
    \end{equation}
    Then it holds that $\underline{u}(t)\le\overline{u}(t)$ for all
    $t\ge0$.
\end{lem}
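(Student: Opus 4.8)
The plan is to reduce everything to the positivity of the single function $h \coloneqq \overline{u}-\underline{u}$. Subtracting the two hypotheses and writing the causal convolution as $(k*h)(t)=\int_0^t k(t-s)h(s)\,\mathrm{d}s$, one sees that $h\in L^\infty(0,\infty)$ satisfies
\begin{equation*}
  h(t)\ge (k*h)(t)\qquad\text{for all }t>0,
\end{equation*}
and it is enough to prove that $h\ge 0$ on $[0,\infty)$. Since $k\in L^\infty(0,\infty)\subset L^1_{\mathrm{loc}}(0,\infty)$ and $h$ is bounded, all the convolutions appearing below are well defined and finite on every bounded time interval.

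The key observation is that convolution against the \emph{nonnegative} kernel $k$ is order preserving: if $g_1\ge g_2$ then $(k*g_1)(t)-(k*g_2)(t)=\int_0^t k(t-s)\bigl(g_1(s)-g_2(s)\bigr)\,\mathrm{d}s\ge 0$. Applying $k*(\cdot)$ repeatedly to $h\ge k*h$ therefore yields
\begin{equation*}
  h\ge k*h\ge k^{*2}*h\ge\cdots\ge k^{*n}*h\qquad\text{for every }n\ge 0,
\end{equation*}
where $k^{*n}$ denotes the $n$-fold convolution of $k$ with itself. The next step is an elementary induction bounding $k^{*n}$: from $k^{*1}=k\le \|k\|_\infty$ a.e.\ and $k^{*(n+1)}(t)=\int_0^t k^{*n}(t-s)k(s)\,\mathrm{d}s\le \|k\|_\infty\int_0^t k^{*n}(\tau)\,\mathrm{d}\tau$ one obtains $k^{*n}(t)\le \|k\|_\infty^{\,n}\,t^{\,n-1}/(n-1)!$, and hence $\int_0^t k^{*n}(\tau)\,\mathrm{d}\tau\le \|k\|_\infty^{\,n}\,t^{\,n}/n!$ for all $t>0$.

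Combining the two facts, for any fixed $t>0$ and every $n$,
\begin{equation*}
  h(t)\ge (k^{*n}*h)(t)\ge -\,\|h\|_{L^\infty(0,\infty)}\int_0^t k^{*n}(t-s)\,\mathrm{d}s\ge -\,\|h\|_{L^\infty(0,\infty)}\,\frac{\|k\|_\infty^{\,n}\,t^{\,n}}{n!}.
\end{equation*}
Letting $n\to\infty$ the right-hand side tends to $0$, so $h(t)\ge 0$; since $t$ is arbitrary this gives $\underline{u}(t)\le\overline{u}(t)$ for all $t\ge 0$. As a byproduct, applying this with $\underline u,\overline u$ two solutions of the Volterra equation $u=k*u+f$ for the same $f$ gives uniqueness for that equation, which is the analogue of the comparison principle remarked after Lemma \ref{comparison-delay-simple}.

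I do not expect a genuine obstacle. The only delicate point, as compared with Lemma \ref{comparison-delay-simple}, is that $(k*h)(t)$ depends on the values of $h$ on the whole interval $[0,t]$ rather than on a single shifted point, so a naive step-by-step induction over intervals is less immediate; one can still make it work on short intervals $[0,T_0]$ with $\|k\|_\infty T_0<1$ by discarding the nonnegative contribution of the already-controlled part of the history, but the single resolvent-type estimate $\int_0^t k^{*n}(\tau)\,\mathrm{d}\tau\le \|k\|_\infty^{\,n}t^{\,n}/n!$ handles the whole half-line in one stroke and avoids any bookkeeping. The measurability of $h$ (a difference of $L^\infty$ functions) and the fact that the inequalities hold for a.e.\ $t$ are routine.
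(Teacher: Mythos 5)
Your argument is correct, and it takes a genuinely different route from the paper. Both proofs begin identically, reducing to the claim that $h \coloneqq \overline{u}-\underline{u} \geq 0$ from the single inequality $h \geq k*h$, and both exploit the fact that convolution with the nonnegative kernel $k$ preserves order. Where they diverge is in the mechanism used to extract positivity: you iterate the inequality to get $h \geq k^{*n}*h$ for every $n$, then apply the classical resolvent-kernel bound $k^{*n}(t)\le \|k\|_\infty^{\,n}\,t^{\,n-1}/(n-1)!$, giving $h(t)\geq -\|h\|_\infty \|k\|_\infty^n t^n/n! \to 0$ for each fixed $t$ in a single stroke over the whole half-line. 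The paper instead partitions $[0,\infty)$ into intervals of length $T$ chosen so that $T\|k\|_\infty < 1$, shows on $[0,T]$ that the (essential) infimum $A_1[T]$ satisfies $(1-\int_0^T k)A_1[T]\geq 0$, hence $A_1[T]\geq 0$, and then bootstraps across successive intervals $[kT,(k+1)T]$, discarding the nonnegative contribution of the already-controlled history. Your version avoids the interval bookkeeping and immediately gives uniqueness of the associated Volterra equation as a byproduct; the paper's version avoids manipulating iterated kernels and factorial estimates, staying slightly more elementary. Both use the $L^\infty$ hypotheses on $k$ and $h$ in an essential way (you to bound $\|h\|_\infty$ and $\|k\|_\infty$ globally, the paper to ensure a single uniform $T$ works for all intervals), so neither has a real advantage in generality under the stated hypotheses.
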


In other words $\underline{u}$ and $\overline{u}$ are respectively lower and upper solutions of the Volterra equation given by
\begin{equation}
 u(t)= (k*u)(t)+f(t)\quad \forall t>0,
\end{equation}
and the comparison principle holds.

\begin{proof}
  Observe that $h(t)\coloneqq\overline{u}(t)-\underline{u}(t)$
  satisfies
    $$h(t)\ge (k*h)(t).$$
    For $T>0$ we consider $A_1[T]\coloneqq\inf_{t\in[0,T]}h(t)$ and  we have
    $$\left(1-\int_0^T k(s) \ds \right)A_1[T]\ge 0.$$
    Therefore when we choose $T$ such that
    \begin{equation}
    \label{uniform-T}
      T\|k\|_\infty<1,  
    \end{equation}
    we conclude that $A_1[T]\ge0$, which means that $\overline{u}(t)\ge\underline{u}(t)$ for all $t\in [0,T]$. Similarly for $t\in[T,2T]$ we define
    $A_2[T]\coloneqq\inf_{t\in[T,2T]}$ and obtain
    $$
    \left(1-\int_T^{2T}k(s)\ds\right)A_2[T]\ge 0.
    $$
    Again, using the uniform bound for $T$ in
    \eqref{uniform-T} we have that
    $A_2[T]\ge0$, which implies that $\overline{u}(t)\ge\underline{u}(t)$ for all $t\in [T,2T]$. By iterating this argument, we deduce that $\overline{u}(t)\ge\underline{u}(t)$ for all $t\ge0$.
\end{proof}

Now we can prove Theorem \ref{distri-conv-thm}.

\begin{proof}[Proof of Theorem \ref{distri-conv-thm}]
  As in the proof of Theorem \ref{delay-conv-thm}, by Duhamel's
  formula and \Cref{doeblin-conv} there exist $C_0,\lambda>0$ such
  that the following inequality holds
    \begin{equation}
    \label{est-delay-n-n*2}
    \|n(t)-n^*\|_{TV}\le C_0e^{-\lambda t}\|n^0-n^*\|_{TV}+C_0\int_0^t e^{-\lambda(t-s)}\|h(s)\|_{TV}\ds,
    \end{equation}
    where $h$ is given by
    $$h(t,a)=(S(a,X^*)-S(a,X(t)))n(t,a)+\delta_0(a)\int_0^\infty
    (S(a,X(t))-S(a',X^*))n(t,a')\d a',$$
    thus, using Hypothesis \ref{hyp:S},  we have the estimate 
    \begin{equation}
      \label{est-h-distri}
      \|h(t,\cdot)\|_{TV}\le 2  \ell |X(t)-X^*|
      \qquad \text{for all $t>0$.}
    \end{equation}
    Also, we can estimate $|r(t)-r^*|$ as in the proof of Theorem \ref{delay-conv-thm}:
    \begin{multline}
      \label{eq:1}
      |r(t)-r^*|
      =
      \left| \int_0^\infty S(a, X(t)) n(t,a) \d a
        - \int_0^\infty S(a, X^*) n^*(a) \d a
      \right|
      \\
      \leq
      \int_0^\infty |S(a, X(t)) - S(a,X^*)| n(t,a) \d a
      +
      \int_0^\infty S(a, X^*) |n(t,a) - n^*(a)| \d a
      \\
      \leq
      \ell |X(t) - X^*|
      +
      \|S\|_\infty \| n(t,a) - n^*(a) \|_{TV}.
    \end{multline}
    Using now \eqref{est-delay-n-n*2} and \eqref{est-h-distri} we obtain
    \begin{multline}
      \label{eq:r-r*}
      |r(t)-r^*|
      \le
        \ell |X(t)-X^*|
      + C_0 \|S\|_\infty \|n^0-n^*\|_{TV}e^{-\lambda t}
      \\
      + 2C_0 \|S\|_\infty \ell \int_0^t e^{-\lambda(t-s)}|X(s)-X^*| \ds.
    \end{multline}
    To simplify the notation define the constants
    $C_1\coloneqq 2C_0\|S\|_\infty$ and
    $C_2\coloneqq C_0\|S\|_\infty \|n^0-n^*\|_{TV}$.

    We seek to estimate $|X(t)-X^*|$, so we define
    $u(t)\coloneqq |X(t)-X^*|$, and we obtain
    \begin{multline*}
      u(t) = \left|
        \int_0^\infty \alpha(s) r(t-s) \ds - r^* \int_0^\infty \alpha(s)
        \d s
      \right|
      \leq
      \int_0^\infty \alpha(s)|r(t-s)-r^*| \ds
      \\
      =
      \int_0^t \alpha(t-s)|r(s)-r^*| \ds
      + \int_t^\infty \alpha(s)|r^0(t-s)-r^*| \ds
      \\
      \leq
      \int_0^t \alpha(t-s)|r(s)-r^*| \ds
      + \|r^0-r^*\|_\infty \int_t^\infty \alpha(s) \ds.
    \end{multline*}
    Using \eqref{eq:r-r*} in the previous expression,
    \begin{multline*}
      u(t)
      \le \|r^0-r^*\|_\infty \int_t^\infty \alpha(s) \ds
      \\
      + \int_0^t \alpha(t-s) \left(
        \ell u(s) + C_2e^{-\lambda s} + C_1 \ell \int_0^{s} e^{-\lambda(s-s')}
        u(s') \d s'
      \right) \ds.
    \end{multline*}
    We define
    \begin{equation*}
      g(t)\coloneqq \|r^0-r^*\|_\infty
      \int_t^\infty \alpha(s) \ds
      + C_2\int_0^t\alpha(t-s)e^{-\lambda s}\ds,
    \end{equation*}
    so we write the inequality for $u(t)$ as
    \begin{equation*}
      u(t) \le g(t)+ \ell (\alpha*u) + C_1 \ell (\alpha*e^{-\lambda t}*u).
    \end{equation*}

    Like in the case of a single discrete delay, we aim to apply the
    comparison lemma. We look for constants $A,\mu>0$ such that
    $u(t)\le Ae^{-\mu t}$ for all $t\ge0$. For this, we would like the
    function $v(t)\coloneqq Ae^{-\mu t}$ to satisfy
    \begin{equation*}
      v(t)\ge g(t)+\ell(\alpha*v)
      +C_1\ell(\alpha*e^{-\lambda t}*v)
      \qquad \text{for all $t\ge0$,}
    \end{equation*}
    or equivalently in terms of $A$ and $\mu$
    \begin{equation}
    \label{supersol-A-lambda-distri}
    A \geq
    g(t) e^{\mu t}
    + \ell A\int_0^t e^{\mu s}\alpha(s) \ds
    + \ell \frac{AC_1}{\lambda-\mu}\int_0^t e^{\mu
      s}\alpha(s)(1-e^{-(\lambda-\mu)(t-s)})
    \ds
    \qquad \text{for $t\ge0$.}
    \end{equation}
    For $\mu<\min\{\beta,\lambda\}$, we estimate each term in the
    right-hand side. For the first one,
    \begin{gather*}
      g(t)e^{\mu t}\le C_3 \frac{C_\alpha}{\beta} e^{-(\beta-\mu)t}
      +C_2 C_\alpha\frac{e^{-(\beta-\mu) t}-e^{-(\lambda-\mu)
          t}}{\lambda-\beta}
      \le
      C_\alpha\left(\frac{C_3}{\beta}+\frac{C_2}{|\lambda-\beta|}\right),
    \end{gather*}
    where we call $C_3 := \|r^0-r^*\|_\infty$. For the remaining two
    terms we have
    \begin{gather*}
    \int_0^t e^{\mu s}\alpha(s)\ds
    \le
    C_\alpha\frac{1-e^{-(\beta-\mu)t}}{\beta-\mu}\le
    \frac{C_\alpha}{\beta-\mu},
     \\
     \int_0^t e^{\mu s}\alpha(s)(1-e^{-(\lambda-\mu)(t-s)})\ds
     \le
     \int_0^t e^{\mu s}\alpha(s)\ds
     \le \frac{C_\alpha}{\beta-\mu}.
   \end{gather*}
   Hence in order to satisfy \eqref{supersol-A-lambda-distri} it is
   enough to satisfy
   \begin{equation*}
     A \geq
     C_\alpha\left(\frac{C_3}{\beta}+\frac{C_2}{|\lambda-\beta|}\right)
     + \frac{\ell A C_\alpha}{\beta-\mu} \left(1 + \frac{C_1}{\lambda-\mu}\right),
   \end{equation*}
   that is,
   \begin{equation*}
     A \left(1 -
       \frac{\ell C_\alpha}{\beta-\mu} \left(1 +
         \frac{C_1}{\lambda-\mu}
       \right)
     \right)
     \geq
     C_\alpha\left(\frac{C_3}{\beta}+\frac{C_2}{|\lambda-\beta|}\right).
   \end{equation*}
   Therefore if the following inequalities hold
   \begin{gather*}
     \ell <
     \frac{\beta-\mu}{C_\alpha}\frac{\lambda-\mu}{\lambda-\mu+C_1},
     \\
     A >
     C_\alpha\left(\frac{C_3}{\beta}+\frac{C_2}{|\lambda-\beta|}\right)
     \left(\frac{(\beta-\mu)(\lambda-\mu)}{(\beta-\mu)(\lambda-\mu)-
         \ell C_\alpha(\lambda-\mu+C_1)}
     \right)
   \end{gather*}
   we get that $A$ and $\mu$ satisfy \eqref{supersol-A-lambda-distri}
   and thus, due to our comparison result in Lemma
   \ref{comparison-delay-distri}
   \begin{equation*}
     |X(t)-X^*|\le A e^{-\mu t}
     \qquad \text{for $t \ge0$}.
   \end{equation*}
   Notice that the dependence on $\|r^0 - r^*\|_\infty$ and
   $\|n^0-n^*\|_{\mathrm{TV}}$ are included in $C_3$ and $C_2$,
   respectively. The exponential decay of $\|n(t)-n^*\|_{TV}$
   readily follows from \eqref{est-delay-n-n*2} and
   \eqref{est-h-distri}, and then exponential decay of $|r(t) - r^*|$
   follows from \eqref{eq:1}.  
\end{proof}

To prove Theorem \ref{distri-conv-thm-noexp} regarding the case in
which $\alpha$ decays algebraically we will need the following lemma
on decay of convolutions:

\begin{lem}
  \label{decay-conv}
  Let $f,g\in L^\infty(\R^+)$ and $a>0,b>1$ such that $f=O(t^{-a})$ and
  $g=O(t^{-b})$ when $t\to\infty$. Then for their convolution we have
  \begin{equation*}
    h(t)\coloneqq\int_0^t f(t-s)g(s)\ds=O(t^{-\min\{a,b-1\}})
    \qquad \text{as $t\to\infty$.}
  \end{equation*}
\end{lem}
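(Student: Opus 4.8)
The plan is to control $h(t)$ for large $t$ by the standard dyadic device of splitting the convolution at the midpoint,
\[
h(t) = \int_0^{t/2} f(t-s)\,g(s)\,\ds + \int_{t/2}^{t} f(t-s)\,g(s)\,\ds =: h_1(t) + h_2(t),
\]
so that on each piece exactly one of the two arguments is at least $t/2$ and the corresponding asymptotic decay can be pulled out of the integral. Before doing this I would record two elementary preliminaries: first, that $g\in L^\infty(\R^+)$ together with $g = O(s^{-b})$, $b>1$, forces $g\in L^1(\R^+)$, say $\int_0^\infty |g| =: M_g < \infty$; and second, that the big-$O$ hypotheses supply a threshold $T_0>0$ and a constant $C>0$ with $|f(\tau)|\le C\tau^{-a}$ and $|g(\tau)|\le C\tau^{-b}$ for all $\tau\ge T_0$.

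For $h_1$, when $t\ge 2T_0$ we have $t-s\in[t/2,t]\subseteq[T_0,\infty)$ throughout the integral, so $|f(t-s)|\le C\,(t/2)^{-a}$ pulls out and $|h_1(t)|$ is bounded by a constant times $t^{-a}\int_0^{t/2}|g| \le M_g\,t^{-a}$. For $h_2$, now $s\in[t/2,t]\subseteq[T_0,\infty)$, so $|g(s)|\le C\,(t/2)^{-b}$ pulls out, and for the remaining factor I would use only $f\in L^\infty$: the crude bound $\int_{t/2}^t |f(t-s)|\,\ds = \int_0^{t/2}|f| \le \tfrac{t}{2}\|f\|_\infty$ gives $|h_2(t)| = O(t^{1-b}) = O(t^{-(b-1)})$. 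Since both $a$ and $b-1$ are $\ge\min\{a,b-1\}$, the bounds $t^{-a}$ and $t^{-(b-1)}$ are each dominated by $t^{-\min\{a,b-1\}}$ for $t\ge 1$, which yields $h(t)=O(t^{-\min\{a,b-1\}})$ as $t\to\infty$. If one wants the bound for all $t>0$ rather than merely asymptotically, I would note $|h(t)|\le t\,\|f\|_\infty\|g\|_\infty$, so $h$ is bounded on compacts and the constant can be enlarged to absorb bounded $t$.

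There is no genuinely hard step here; the proof is a routine midpoint split. The only two points needing attention are: (i) the hypotheses on $f$ and $g$ are asymptotic, so one must invoke the $L^\infty$ bounds both to estimate the tail term $h_2$ — where, notably, no decay of $f$ is used at all, only integrability of the ``far'' factor matters — and to handle bounded $t$; and (ii) bookkeeping of the exponents, checking that the non-sharp bound $t^{-(b-1)}$ for $h_2$ (it is not optimal when $a>1$, but it is all the statement asks for) still fits under $t^{-\min\{a,b-1\}}$.
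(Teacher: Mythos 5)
Your proof is correct and follows essentially the same route as the paper: split the convolution at $t/2$, use the decay of $f$ together with $g\in L^1$ on the first half, and the decay of $g$ together with $f\in L^\infty$ on the second half, yielding $O(t^{-a})+O(t^{-(b-1)})$. The only (cosmetic) difference is that in the second piece the paper keeps $s^{-b}$ inside and integrates it exactly, whereas you pull out $\sup_{s\ge t/2}|g(s)|$ and bound $\int_0^{t/2}|f|$ by $\tfrac{t}{2}\|f\|_\infty$; both give the same rate.
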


\begin{proof}
  Observe that $g\in L^1(\R^+)$ since $b>1$. Thus there exists two
  constants $C_1,C_2>0$ such that for $t$ large enough we have the
  following estimate
    \begin{equation*}
        \begin{split}
            |h(t)|&\le \int_0^t |f(t-s)g(s)|\ds,\\
            &\le \int_0^\frac{t}{2} |f(t-s)g(s)|\ds+\int_\frac{t}{2}^t |f(t-s)g(s)|\ds\\
            &\le C_1\int_0^\frac{t}{2} (t-s)^{-a}|g(s)|\ds+C_2\int_\frac{t}{2}^t |f(t-s)|s^{-b}\ds\\
            &\le 2^a t^{-a}C_1\|g\|_1+C_2\|f\|_\infty\frac{2^{b-1}-1}{b-1}t^{-(b-1)},
        \end{split}
    \end{equation*}
    where the last inequality proves the desired result.
\end{proof}

With this lemma we  prove  Theorem \ref{distri-conv-thm-noexp}.

\begin{proof}[Proof of Theorem \ref{distri-conv-thm-noexp}]
  We can carry out the same initial steps as in the exponential
  case. With the same notation, the function $u(t)=|X(t)-X^*|$
  satisfies
  $$u(t) \le g(t)+ \ell (\alpha*u) + C_1 \ell (\alpha*e^{-\lambda t}*u),$$
  with
  $g(t)=C_3 \int_t^\infty\alpha(s) \ds+C_2\int_0^t\alpha(t-s)e^{-\lambda
    s}\ds$. We recall that the constants $C_1$, $C_2$ and $C_3$ were
  defined by
  \begin{equation*}
    C_1\coloneqq 2C_0\|S\|_\infty,
    \qquad
    C_2\coloneqq C_0\|S\|_\infty \|n^0-n^*\|_{TV},
    \qquad
    C_3 := \|r^0-r^*\|_\infty.
  \end{equation*}
  Like the previous result, we aim to apply the comparison lemma. We
  look for constants $A,\mu>0$ such that the function
  $v(t)\coloneqq \frac{A}{1+t^\mu}$ satisfies the inequality
  \begin{equation*}
    v(t)\ge g(t)+ \ell (\alpha*v) + C_1 \ell (\alpha*e^{-\lambda t}*v)
    \qquad \text{for all $t\ge0$},
  \end{equation*}
  or equivalently in terms of $A$ and $\mu$,
  \begin{multline}
    \label{supersol-A-lambda-distri2}
    A \geq
    g(t)(1+t^\mu)
    + \ell A\int_0^t \frac{1+t^\mu}{1+(t-s)^\mu}\alpha(s) \ds
    \\
    + \ell A C_1 \int_0^t\int_0^s
    \alpha(t-s)e^{-\lambda(s-s')}\frac{1+t^\mu}{1+(s')^{\mu}}\ds'\ds
  \end{multline}
  for all $t \geq 0$. We now estimate each term in the right-hand
  side. First observe that for the first term of $g(t)$ we have that
  \begin{equation*}
    \int_t^\infty \alpha(s)\ds \leq \frac{C_{\alpha,\beta}}{1 + t^{\beta-1}}
  \end{equation*}
  for some constant $C_{\alpha,\beta} > 0$ depending on $C_\alpha$ and
  $\beta$. Thus, by choosing $\mu=\beta-1$ and applying Lemma
  \ref{decay-conv}, there exists a constant $C_4 > 0$ depending on
  $C_\alpha$ and $\beta$ such that
  \begin{equation*}
    g(t)(1+t^\mu)
    \leq
    C_3 C_4 \frac{1+t^\mu}{1+t^{\beta-1}}
    + C_2 C_4 \frac{1+t^\mu}{1+t^{\beta-1}}
    \leq
    C_4 (C_2 + C_3)
  \end{equation*}
  and similarly (possibly taking a larger constant $C_4$) we get
  \begin{align*}
    &\int_0^t \frac{1+t^\mu}{1+(t-s)^\mu}\alpha(s)\ds
    \leq
    C_4,
    \\
    &\int_0^t\int_0^s
      \alpha(t-s)e^{-\lambda(s-s')}\frac{1+t^\mu}{1+{s'}^{\mu}}\ds'\ds
      \leq
      C_4.
  \end{align*}
  Therefore in order to satisfy \eqref{supersol-A-lambda-distri} it is
  enough to satisfy
  \begin{equation*}
    A \geq C_4 (C_2 + C_3) + \ell A C_4 (1 + C_1),
  \end{equation*}
  or equivalently
  \begin{equation*}
    A (1 - \ell C_4 (1 + C_1) ) \geq C_4 (C_2 + C_3).
  \end{equation*}
  Hence, if the following inequalities hold 
  \begin{equation*}
    \ell < \frac{1}{C_4(1+C_1)},
    \qquad
    A > \frac{C_4 (C_2 + C_3)}{1-\ell C_4 (1+C_1)}.
  \end{equation*}
  we get that $A$ and $\mu$ satisfy \eqref{supersol-A-lambda-distri2}
  and thus
  \begin{equation*}
    |X(t)-X^*|\le \frac{A}{1+t^{\beta-1}}
    \qquad \text{for all $t\ge0$}.
  \end{equation*}
Notice again that the dependence on the initial condition is
  implicit in $C_2$ and $C_3$. The convergence of $|r(t) - r^*|$ and
  $\|n(t)-n^*\|_{TV}$ readily follows from estimates
  \eqref{est-delay-n-n*2}, \eqref{est-h-distri} and \eqref{eq:1} as in
  the exponential case, by using Lemma \ref{decay-conv} to estimate the
  integral in \eqref{est-delay-n-n*2}.
\end{proof}

We end the paper with the following
two remarks.
\begin{rmk}
  The convergence results of Theorems \ref{distri-conv-thm} and
  \ref{distri-conv-thm-noexp} with $\alpha$ bounded by an exponential
  function or with algebraic tail, respectively, can be extended for a
  general $\alpha$ as long as we are able to find a suitable upper
  solution, which might depend on several parameters and an
  optimization may be performed.
\end{rmk}

\begin{rmk}[Relaxed hypotheses]
\label{rmkDistri}
Analogously to Remark \ref{rmkDiscret}, our proofs for the model with
distributed delay can be carried out under the relaxed Hypothesis
\ref{hyp: S-weak-strong} on $S$, instead of Hypothesis \ref{hyp:S}.
In this case we obtain convergence to the equilibrium in both regimes,
provided the initial data is close to the equilibrium in terms of $r$.
\end{rmk}

\section*{Acknowledgments}

The authors acknowledge support from projects of the Spanish
\emph{Ministerio de Ciencia e Innovación} and the European Regional
Development Fund (ERDF/FEDER) through grants PID2023-151625NB-100,
RED2022-134784-T, and CEX2020-001105-M, all of them funded by
MCIN/AEI/10.13039/501100011033. NT was supported by the grant Juan de
la Cierva FJC2021-046894-I funded by MCIN/AEI with the European Union
NextGenerationEU/PRTR, and also supported by the project EUR SPECTRUM
with the initiative IDEX of Université de Côte d'Azur.

\bibliography{Delay_model.bib}

\begin{thebibliography}{10}

\bibitem{bansaye2020ergodic}
Vincent Bansaye, Bertrand Cloez, and Pierre Gabriel.
\newblock Ergodic behavior of non-conservative semigroups via generalized
  doeblin’s conditions.
\newblock {\em Acta Applicandae Mathematicae}, 166(1):29--72, 2020.

\bibitem{brunel2000dynamics}
Nicolas Brunel.
\newblock Dynamics of sparsely connected networks of excitatory and inhibitory
  spiking neurons.
\newblock {\em Journal of computational neuroscience}, 8(3):183--208, 2000.

\bibitem{caceres2024sequence}
Mar\'{\i}a~J. C\'aceres, Jos\'e~A. Ca\~nizo, and Alejandro Ramos-Lora.
\newblock Sequence of pseudoequilibria describes the long-time behavior of the
  nonlinear noisy leaky integrate-and-fire model with large delay.
\newblock {\em Phys. Rev. E}, 110:064308, Dec 2024.

\bibitem{caceres2025asymptotic}
Mar{\'\i}a~J C{\'a}ceres, Jos{\'e}~A Ca{\~n}izo, and Alejandro Ramos-Lora.
\newblock On the asymptotic behavior of the nnlif neuron model for general
  connectivity strength.
\newblock {\em Communications in Mathematical Physics}, 406(5):115, 2025.

\bibitem{caceres2011analysis}
Mar{\'\i}a~J. C{\'a}ceres, Jos{\'e}~A Carrillo, and Beno{\^\i}t Perthame.
\newblock Analysis of nonlinear noisy integrate \& fire neuron models: blow-up
  and steady states.
\newblock {\em The Journal of Mathematical Neuroscience}, 1(1):7, 2011.

\bibitem{caceres2019global}
Mar{\'\i}a~J. C{\'a}ceres, Pierre Roux, Delphine Salort, and Ricarda Schneider.
\newblock Global-in-time solutions and qualitative properties for the nnlif
  neuron model with synaptic delay.
\newblock {\em Communications in Partial Differential Equations},
  44(12):1358--1386, 2019.

\bibitem{caceres2018analysis}
Mar{\'\i}a~J C{\'a}ceres and Ricarda Schneider.
\newblock Analysis and numerical solver for excitatory-inhibitory networks with
  delay and refractory periods.
\newblock {\em ESAIM: Mathematical Modelling and Numerical Analysis},
  52(5):1733--1761, 2018.

\bibitem{canizo2019asymptotic}
Jos{\'e}~A. Ca{\~n}izo and Havva Yolda{\c{s}}.
\newblock Asymptotic behaviour of neuron population models structured by
  elapsed-time.
\newblock {\em Nonlinearity}, 32(2):464, 2019.

\bibitem{carrillo2025nonlinear}
Jos{\'e}~A. Carrillo and Pierre Roux.
\newblock Nonlinear partial differential equations in neuroscience: from
  modelling to mathematical theory.
\newblock {\em arXiv preprint arXiv:2501.06015}, 2025.

\bibitem{chevallier2015mean}
Julien Chevallier.
\newblock Mean-field limit of generalized hawkes processes.
\newblock {\em Stochastic Processes and their Applications},
  127(12):3870--3912, 2017.

\bibitem{chevallier2015microscopic}
Julien Chevallier, Mar{\'\i}a~Jos{\'e} C{\'a}ceres, Marie Doumic, and Patricia
  Reynaud-Bouret.
\newblock Microscopic approach of a time elapsed neural model.
\newblock {\em Mathematical Models and Methods in Applied Sciences},
  25(14):2669--2719, 2015.

\bibitem{deng2020analysis}
Keng Deng, Glenn~F Webb, and Yixiang Wu.
\newblock Analysis of age and spatially dependent population model: application
  to forest growth.
\newblock {\em Nonlinear Analysis: Real World Applications}, 56:103164, 2020.

\bibitem{fonte2022long}
Claudia Fonte and Valentin Schmutz.
\newblock Long time behavior of an age-and leaky memory-structured neuronal
  population equation.
\newblock {\em SIAM Journal on Mathematical Analysis}, 54(4):4721--4756, 2022.

\bibitem{gabriel2018measure}
Pierre Gabriel.
\newblock Measure solutions to the conservative renewal equation.
\newblock {\em ESAIM: Proceedings and Surveys}, 62:68--78, 2018.

\bibitem{ly2009spike}
Cheng Ly and Daniel Tranchina.
\newblock Spike train statistics and dynamics with synaptic input from any
  renewal process: a population density approach.
\newblock {\em Neural Computation}, 21(2):360--396, 2009.

\bibitem{michel2005general}
Philippe Michel, St{\'e}phane Mischler, and Beno{\^\i}t Perthame.
\newblock General relative entropy inequality: an illustration on growth
  models.
\newblock {\em Journal de math{\'e}matiques pures et appliqu{\'e}es},
  84(9):1235--1260, 2005.

\bibitem{mischler2018weak}
St{\'e}phane Mischler, Cristobal Qui{\~n}inao, and Qilong Weng.
\newblock Weak and strong connectivity regimes for a general time elapsed
  neuron network model.
\newblock {\em Journal of Statistical Physics}, 173(1):77--98, 2018.

\bibitem{mischler2016spectral}
St{\'e}phane Mischler and Justine Scher.
\newblock Spectral analysis of semigroups and growth-fragmentation equations.
\newblock In {\em Annales de l'IHP Analyse non lin{\'e}aire}, volume~33, pages
  849--898, 2016.

\bibitem{mischler2018}
St{\'e}phane Mischler and Qilong Weng.
\newblock Relaxation in time elapsed neuron network models in the weak
  connectivity regime.
\newblock {\em Acta Applicandae Mathematicae}, 157(1):45--74, 2018.

\bibitem{pakdaman2009dynamics}
Khashayar Pakdaman, Beno{\^\i}t Perthame, and Delphine Salort.
\newblock Dynamics of a structured neuron population.
\newblock {\em Nonlinearity}, 23(1):55, 2009.

\bibitem{pakdaman2013relaxation}
Khashayar Pakdaman, Beno{\^\i}t Perthame, and Delphine Salort.
\newblock Relaxation and self-sustained oscillations in the time elapsed neuron
  network model.
\newblock {\em SIAM Journal on Applied Mathematics}, 73(3):1260--1279, 2013.

\bibitem{pakdaman2014adaptation}
Khashayar Pakdaman, Beno{\^\i}t Perthame, and Delphine Salort.
\newblock Adaptation and fatigue model for neuron networks and large time
  asymptotics in a nonlinear fragmentation equation.
\newblock {\em The Journal of Mathematical Neuroscience}, 4:1--26, 2014.

\bibitem{perthame2006transport}
Beno{\^\i}t Perthame.
\newblock {\em Transport equations in biology}.
\newblock Frontiers in Mathematics. Springer Science \& Business Media, Basel,
  2006.

\bibitem{perthame2013voltage}
Benoît Perthame and Delphine Salort.
\newblock On a voltage-conductance kinetic system for integrate \& fire neural
  networks.
\newblock {\em Kinetic \& Related Models}, 6(4):841--864, 2013.

\bibitem{pham1998activity}
Jo{\"e}l Pham, Khashayar Pakdaman, Jean Champagnat, and Jean-Fran{\c{c}}ois
  Vibert.
\newblock Activity in sparsely connected excitatory neural networks: effect of
  connectivity.
\newblock {\em Neural Networks}, 11(3):415--434, 1998.

\bibitem{quininao2016microscopic}
Crist{\'o}bal Qui{\~n}inao.
\newblock A microscopic spiking neuronal network for the age-structured model.
\newblock {\em Acta Applicandae Mathematicae}, 146:29--55, 2016.

\bibitem{roux2021towards}
Pierre Roux and Delphine Salort.
\newblock Towards a further understanding of the dynamics in the excitatory
  nnlif neuron model: Blow-up and global existence.
\newblock {\em Kinetic \& Related Models}, 14(5), 2021.

\bibitem{schwalger2019mind}
Tilo Schwalger and Anton~V. Chizhov.
\newblock Mind the last spike—firing rate models for mesoscopic populations
  of spiking neurons.
\newblock {\em Current opinion in neurobiology}, 58:155--166, 2019.

\bibitem{sepulveda2023well}
Mauricio Sep{\'u}lveda, Nicolas Torres, and Luis~Miguel Villada.
\newblock Well-posedness and numerical analysis of an elapsed time model with
  strongly coupled neural networks.
\newblock {\em arXiv preprint arXiv:2310.02068}, 2023.

\bibitem{torres2021elapsed}
Nicol{\'a}s Torres, Mar{\'\i}a~J. C{\'a}ceres, Beno{\^\i}t Perthame, and
  Delphine Salort.
\newblock An elapsed time model for strongly coupled inhibitory and excitatory
  neural networks.
\newblock {\em Physica D: Nonlinear Phenomena}, page 132977, 2021.

\bibitem{torres2022multiple}
Nicol{\'a}s Torres, Beno{\^\i}t Perthame, and Delphine Salort.
\newblock A multiple time renewal equation for neural assemblies with elapsed
  time model.
\newblock {\em Nonlinearity}, 35(10):5051, 2022.

\bibitem{torres2020dynamics}
Nicolas Torres and Delphine Salort.
\newblock Dynamics of neural networks with elapsed time model and learning
  processes.
\newblock {\em Acta Applicandae Mathematicae}, 170(1):1065--1099, 2020.

\end{thebibliography}
\bibliographystyle{plain}

\end{document}